\documentclass[11pt,a4paper]{amsart}

\listfiles
\usepackage{times,epsfig}
\usepackage{amsfonts,amscd,amsmath,amssymb,amsthm}

\usepackage{bbm}
\numberwithin{equation}{section}

\DeclareSymbolFont{SY}{U}{psy}{m}{n}
\DeclareMathSymbol{\emptyset}{\mathord}{SY}{'306}

\DeclareMathSymbol{\newtimes}{\mathbin}{SY}{'264}

\newcommand{\R}{\mathbb{R}}
\newcommand{\T}{\mathbb{T}}
\newcommand{\C}{\mathbb{C}}

\newcommand{\N}{\mathbb{N}}


\newcommand{\cA}{{\mathcal A}}
\newcommand{\cB}{{\mathcal B}}

\newcommand{\cF}{{\mathcal F}}
\newcommand{\cH}{{\mathcal H}}

\newcommand{\cK}{{\mathcal K}}
\newcommand{\cL}{{\mathcal L}}

\newcommand{\cT}{{\mathcal T}}
\newcommand{\cU}{{\mathcal U}}

\newcommand{\fo}{F{\o}lner sequence~}
\newcommand{\fos}{F{\o}lner sequences~}

\newcommand{\1}{\mathbbm 1}

\marginparwidth 20mm \addtolength{\textheight}{10mm}
\addtolength{\textwidth}{20mm} \addtolength{\topmargin}{-10mm}

\oddsidemargin 10mm \evensidemargin 10mm

{\bf}{\it}
{\bf}{\it}
{\bf}{\it}
{\bf}{\it}

{\bf}{\it}

\newtheorem{theorem}{Theorem}[section]{\bf}{\it}
\newtheorem{proposition}[theorem]{Proposition}{\bf}{\it}
\newtheorem{corollary}[theorem]{Corollary}{\bf}{\it}
{\it}{\rm}
{\bf}{\it}
\newtheorem{remark}[theorem]{Remark}{\it}{\rm}
{\bf}{\it}
{\bf}{\it}
{\bf}{\it}

\theoremstyle{definition}
\newtheorem{definition}[theorem]{Definition}


\DeclareMathAlphabet{\Ma}{U}{msa}{m}{n}
\DeclareMathAlphabet{\Mb}{U}{msb}{m}{n}

\DeclareMathAlphabet{\Meuf}{U}{euf}{m}{n}
\DeclareSymbolFont{ASMa}{U}{msa}{m}{n}
\DeclareSymbolFont{ASMb}{U}{msb}{m}{n}
\DeclareMathSymbol{\hrist}{\mathord}{ASMa}{"16}
\DeclareMathSymbol{\varkappa}{\mathalpha}{ASMb}{"7B}
\DeclareMathSymbol{\CrPr}{\mathord}{ASMb}{"6F}

\def\got#1{\Meuf{#1}}
\def\ot #1.{{\got{#1}}}



\title[F\o lner C*-algebras]{Amenable traces and F\o lner C*-algebras}

\author{Pere Ara}
\address{Department of Mathematics,
  Universitat Aut\`onoma de Barcelona, 08193 Bellaterra (Bar\-ce\-lona), Spain}
\email{para@mat.uab.cat}

\author{Fernando Lled\'o}
\address{Department of Mathematics, University Carlos~III Madrid,
  Avda.~de la Universidad~30, 28911 Legan\'es (Madrid), Spain
  and Instituto de Ciencias Matem\'{a}ticas (CSIC - UAM - UC3M - UCM).}
\email{flledo@math.uc3m.es}

\date{\today}

\thanks{The first-named author was partially supported by DGI MICIIN
MTM2011-28992-C02-01, and by the Comissionat per Universitats i
Recerca de la Generalitat de Catalunya. The second-named author was
partially supported by projects
DGI MICIIN MTM2012-36372-C03-01 and Severo Ochoa SEV-2011-0087
of the spanish Ministry of Economy and Competition.}

\subjclass[2010]{46L05,47L65,43A07}
\keywords{C*-algebras, F\o lner sequences, amenable groups, amenable trace,
crossed products. tensor products}

\begin{document}

\begin{abstract}
In the present article we review an approximation procedure for
amenable traces on unital and separable C*-algebras acting on a
Hilbert space in terms of F\o lner sequences of non-zero finite rank
projections. We apply this method to improve spectral approximation
results due to Arveson and B\'edos. We also present an abstract
characterization in terms of unital completely positive maps of
unital separable C*-algebras admitting a non-degenerate
representation which has a F\o lner sequence or, equivalently, an
amenable trace. This is analogous to Voiculescu's abstract
characterization of quasidiagonal C*-algebras. We define F\o lner
C*-algebras as those unital separable C*-algebras that satisfy these
equivalent conditions. Finally we also mention some permanence
properties related to these algebras.
\end{abstract}

\maketitle

\tableofcontents

\section{Introduction}\label{sec:intro}

There are two well-known important characterizations of discrete
amenable groups, one given in terms of the existence of an invariant
mean, and the other in terms of the existence of F\o lner nets of
finite subsets of the group. In
his seminal article \cite[Section~V]{Connes76}, Alain Connes gave an
analogue of these in the context of von Neumann algebras
introducing amenable traces and F\o lner nets for operators, 
respectively (see also \cite{ConnesIn76,Popa86,PopaIn88}
as well as Sections~\ref{sec:foelner} and \ref{sec:amenable} for precise
definitions and additional results). F\o lner nets for operators are given in terms of
non-zero finite rank orthogonal projections $\{P_\lambda\}_\lambda$ in the
corresponding Hilbert space and satisfying natural approximation
conditions (see Definition~\ref{def:Foelner} for details).
Connes used these concepts as a crucial tool in the
classification of injective type~II$_1$ factors. Recently, this
circle of ideas has been used to define a new invariant for a
general separable type~II$_1$ factor that measures how badly the
factor fails to satisfy Connes' F\o lner type condition
(cf.~\cite{Bannon07}).

In addition to these theoretical developments, \fos
for operators have been also used
in spectral approximation problems: given a sequence of linear
operators $\{T_n\}_{n\in\N}$ acting on a complex Hilbert space $\cH$
that approximates an operator $T$ in a suitable sense, it is natural
to ask how the spectral objects of $T$ relate to those of $T_n$
when $n\to\infty$. We recall next
the following classical approximation result for scalar spectral measures of
Toeplitz operators due to Szeg\"o:
denote by $\T$ the unit circle with normalized Haar measure $d\theta$
and consider the real-valued functions $g$ in $L^\infty(\T)$
which can be thought as (selfadjoint) multiplication operators on the
complex Hilbert space $\cH:=L^2(\T)$,
i.e., $M_g\,\varphi=g\;\varphi$, $\varphi\in\cH$. Denote by $P_n$ the finite-rank
orthogonal projection onto the linear span of $\{z^l\mid z\in\T, l=0,\dots,n\}$
and let $M_g^{(n)}:=P_n\,M_g\,P_n$ be the corresponding finite section matrix. Write the corresponding
eigenvalues (repeated according to multiplicity) as $\{\lambda_{0,n},\dots,\lambda_{n,n}\}$.
Then, for any continuous $f\colon\R\to\R$ one has
\begin{equation}\label{szego}
 \lim_{n\to\infty}
  \frac{1}{n+1}
  \Big( f(\lambda_{0,n})+\dots+f(\lambda_{n,n})\Big) =\int_\T f(g(\theta)) \, d\theta
\end{equation}
(see \cite[Section~8]{Szego20}, \cite[Chapter~5]{bGrenander84} and
\cite{WidomIn65} for a careful analysis of this result; a recent
standard book analyzing many aspects of Toeplitz operators and
containing a large number of references is \cite{bBoettcher06}). The
equation (\ref{szego}) may be also reformulated in terms of
weak-* convergence of the corresponding spectral measures and it
allows the numerical approximation of the spectrum of $M_g$ in terms of
the eigenvalues of its finite sections (see \cite{ArvesonIn94} as
well as Chapter~7 in \cite{Roch08} and references cited therein).
These classical approximation results motivated Arveson to consider
spectral approximations in a more general context than Toeplitz
operators and uses techniques from operator algebras. Among other
results, Arveson gave conditions that guarantee that the essential
spectrum of a selfadjoint operator $T$ may be recovered from the
sequence of eigenvalues of certain finite dimensional compressions
$T_n$ (cf.~\cite{Arveson94,ArvesonIn94}). These results were then
extended by B\'edos who systematically applied the concept of \fo to
spectral approximation problems \cite{Bedos94,Bedos95,Bedos97} (see
also \cite{pLledo11} and references therein). In general, operator
algebraic techniques have also contributed to address these
approximation problems (some examples are
\cite{Brown06,bHagen01,Hansen08}). The notion of F\o lner sequences
has turned to be also interesting in the context of single operator
theory. In \cite{pLledoYakubovich12} Yakubovich and the second-named
author show that several classes of non-normal operators have a 
F\o lner sequence and analyze the relation to the class of finite
operators introduced by Williams in \cite{Williams70}. See also 
\cite{pALlY03} for a review of the notion of F\o lner sequence of projections
in operator algebras and operator theory that includes a new proof that
any essentially normal operator has an increasing F\o lner sequence 
strongly converging to $\1$. In addition,
F\o lner sequences in operator algebras are also important in the
study of growth conditions, i.e., conditions involving the asymptotic growth, as $n\to \infty$, of 
the spaces linearly  spanned by the sets $X^n$, where $X$ is a finite generating set of  
a C*-algebra  (see, e.g., \cite{Vaillant96,CS08}). We
also refer to \cite{Brown04,Brown06a,Ozawa04} for a thorough description of
the relations of amenable traces and F\o lner sequences to other
important areas like, e.g., Connes' embedding problem.

An important step in the proof of the Arveson-B\'edos spectral approximation
result mentioned above is the compatibility between the choice of
the F\o lner sequence in the Hilbert space and the amenable trace.
In fact, if the unital and separable concrete
C*-algebra $\cA\subset\cL(\cH)$ has an amenable trace $\tau$
and $\{P_n\}_n$ is a F\o lner sequence of non-zero
finite rank projections for $\cA$ it is needed that
the projections approximate the amenable trace in the following
natural sense
\begin{equation}\label{eq:approx-trace}
 \tau (A)=\lim_{n\to\infty} \frac{{\mathrm{Tr}}(AP_n)}{\mathrm{Tr}(P_n)}\;,\quad A\in \cA\;,
\end{equation}
where $\mathrm{Tr}(\cdot)$ denotes the canonical trace on
$\cL(\cH)$. Now given $\cA\subset\cL(\cH)$ with an amenable trace
$\tau$ it is possible to construct a F\o lner sequence in different
ways. As observed by B\'edos in \cite{Bedos95} one way to obtain a
F\o lner sequence $\{P_n\}$ for $\cA\subset\cL (\cH )$ is
essentially contained in \cite{Connes76,ConnesIn76}. In these
articles Connes adapts the group theoretic methods by Day and
Namioka to the context of operators. Using this technique one loses
track of the initial amenable trace $\tau$, in the sense that the
sequence $\{ P_n\}$ does not necessarily satisfy
(\ref{eq:approx-trace}). To avoid this problem one may assume in
addition that $\cA$ has a unique tracial state. This is sufficient
to guarantee a good spectral approximation behavior of relevant
examples like almost Mathieu operators, which are contained in the
irrational rotation algebra (cf.~\cite{bBoca01}).

In contrast with the previous method, the construction of a F\o lner
sequence given in \cite[Theorem 6.1]{Ozawa04} (see also
\cite[Theorem 6.2.7]{bBrown08}) allows the approximation of the original
trace as in Eq.~(\ref{eq:approx-trace}). We will review this method
in Section~\ref{sec:amenable} and apply it to prove a spectral
approximation result in the spirit of Arveson and B\'edos, but
removing the hypothesis of a unique trace (see
Theorem~\ref{teo:apply} for details as well as
\cite[p.~354]{Arveson94}, \cite[Theorem~1.3]{Bedos95} or
\cite[Theorem~6~(iii)]{Bedos97}).

In the last section of this article we will also give an abstract
characterization of unital separable C*-algebras admitting a
non-degenerate representation $\pi$ on a Hilbert space such that
there is a F\o lner sequence for $\pi(\cA)$ or, equivalently, such
that $\pi(\cA)$ has an amenable trace (see Theorem
\ref{thm:characFolner}). More precisely, we obtain that these
conditions are equivalent to the existence of a sequence of unital
completely positive (u.c.p.) maps $\varphi_n \colon \cA \to
M_{k(n)}(\C)$ which is asymptotically multiplicative with respect to
the normalized Hilbert-Schmidt norm $\| \cdot \|_{2, \text{tr}}$ on
$M_{k(n)}(\C)$. Motivated by this relationship, we call the
C*-algebras admitting such  finite dimensional approximations {\em
F\o lner C*-algebras} (Definition \ref{def:FA}). It turns out that
this is the same class as the {\em weakly hypertracial} C*-algebras
studied by B\'edos in \cite{Bedos95}. Our result is inspired by
Voiculescu's abstract characterization of quasidiagonal C*-algebras
(cf.~\cite{voicu91}), which asserts that a unital separable
C*-algebra $\cA$ is quasidiagonal if and only if there is a sequence
of u.c.p. maps $\varphi_n \colon \cA \to M_{k(n)}(\C)$ which is
asymptotically multiplicative and asymptotically isometric with
respect to the operator norm on $M_{k(n)}(\C)$.  We end the paper by
recalling some known permanence properties of weakly hypertracial
C*-algebras, proved by B\'edos in \cite{Bedos95}.

\smallskip

We refer the reader to \cite{bBrown08}, especially Chapter 6, for background 
material. The paper is basically self-contained modulo some of the results contained
in that book.

\smallskip

{\bf Notation:}
We will denote by $\cL(\cH)$ the C*-algebra of bounded linear
operators on the complex separable Hilbert space $\cH$, and by
$\cK(\cH)$ the ideal of compact operators on $\cH$. The unitary
group of a unital C*-algebra $\cA$ is denoted by $\cU(\cA)$. We will assume
that any representation of a unital C*-algebra preserves the unit
(i.e. it is non-degenerate). To simplify expressions we will sometime
use the standard notation for the commutator of two operators: $[A,B]:=AB-BA$.

\section{F\o lner type conditions for operators}\label{sec:foelner}
The notion of F\o lner sequences for operators has its origins in
group theory. Recall that a discrete countable group $\Gamma$ is
amenable if it has an invariant mean, i.e. there is a positive
linear functional $\psi$ on $\ell^\infty(\Gamma)$ with norm one such
that
\[
  \psi(\gamma f)=\psi(f)\;,\quad \gamma\in\Gamma\;,\quad f\in \ell^\infty(\Gamma)\;,
\]
where $(\gamma f)(\gamma_0):=f(\gamma^{-1}\gamma_0)$.
A F{\o}lner sequence for
$\Gamma$ is a sequence of non-empty finite subsets $\Gamma_i\subset\Gamma$
that satisfy
\begin{equation}\label{eq:foelner-g}
\lim_{i}
\frac{|(\gamma \Gamma_i)\triangle
\Gamma_i|}{|\Gamma_i|} =0\qquad\text{for all}\quad \gamma\in\Gamma \,,
\end{equation}
where $\triangle$ denotes the symmetric difference and $|X|$ is the
cardinality of $X$ for any set $X$. Then, $\Gamma$ has a F{\o}lner
sequence if and only if $\Gamma$ is amenable (cf.~Chapter~4 in
\cite{bPaterson88}). If $\Gamma$ has a F\o lner sequence one can
always find another F\o lner sequence which, in addition to
Eq.~(\ref{eq:foelner-g}), is also increasing and complete, i.e.
$\Gamma_i\subset\Gamma_j$ if $i\leq j$ and $\Gamma=\cup_i \Gamma_i$.

The counterpart of the previous definition in the context of operator
algebras is given as follows:
\begin{definition}\label{def:Foelner}
Let $\mathcal{A}\subset\cL(\cH)$ be a C*-algebra of bounded
operators on a complex separable Hilbert space $\mathcal{H}$.
\begin{itemize}
 \item[(i)] A sequence
of non-zero finite rank orthogonal
projections $\{P_n\}_{n\in \N}\subset\cL(\cH)$
is called a {\em F{\o}lner sequence for $\mathcal{A}$} if
\begin{equation}\label{eq:F1}
\lim_{n} \frac{\|A P_n-P_n A\|_2}{\|P_n\|_2} = 0\;\;,\quad A\in\cA\;,
\end{equation}
where $\|\cdot\|_2$ denotes the Hilbert-Schmidt norm.

\noindent The F\o lner sequence $\{ P_n\}_n$ is said to be a {\em
proper} F\o lner sequence if it is an increasing sequence of
projections converging to $\1$ in the strong operator topology.
\item[(ii)] $\cA$ satisfies the {\em F\o lner condition} if for any finite
set $\cF\subset\cA$ and any $\varepsilon >0$ there exists a finite rank orthogonal
projection $P$ such that
\begin{equation}\label{eq:FC}
 \frac{\|A P-P A\|_2}{\|P\|_2} < \varepsilon \;\;,\quad A\in\cF\;.
\end{equation}

\end{itemize}
\end{definition}

We will state next some immediate consequences of the definition that will
be used later on. Note that the definition of F\o lner sequence can extended in 
an obvious way to any set $\cT$ of bounded operators in $\cL(\cH)$ requiring 
condition (\ref{eq:F1}) for all $A\in\cT$. (See also 
Sections~1 and 2 in \cite{pLledoYakubovich12}).

\begin{proposition}\label{total}
Let $\cT\subset\cL(\cH)$ be a set of operators
and $\{P_n\}_{n\in \N}$ a sequence of non-zero finite rank
orthogonal projections.
\begin{itemize}
\item[(i)] $\{P_n\}_{n\in \N}$ is a F{\o}lner sequence
for $\cT$ if and only if it is a F{\o}lner sequence for $C^*(\cT,\1)$
(the C*-algebra generated by $\cT$ and $\1$).
\item[(ii)] Let $\cT$ be a selfadjoint set (i.e.~$\cT^*=\cT$). Then
$\{P_n\}_{n\in \N}$ is a F{\o}lner sequence for $\cA$ if and only if
one of the four following equivalent conditions holds for all
$A\in\cA$:
\begin{equation}\label{F1}
\lim_{n} \frac{\|A P_n-P_n A\|_p}{\|P_n\|_p} = 0,\qquad
p\in\{1,2\}
\end{equation}
or
\begin{equation}\label{F2}
\lim_{n} \frac{\|(I-P_n)A P_n\|_p}{\|P_n\|_p} = 0,\qquad
p\in\{1,2\}\;,
\end{equation}
where $\|\cdot\|_1$ and $\|\cdot\|_2$ are the trace-class and Hilbert-Schmidt
norms, respectively.
\end{itemize}
\end{proposition}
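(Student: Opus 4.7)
The plan for \textbf{(i)} is the standard one: define
\[
\cF := \bigl\{A \in \cL(\cH) \st \lim_n \|[A,P_n]\|_2/\|P_n\|_2 = 0\bigr\},
\]
verify that $\cF$ is a unital C*-subalgebra of $\cL(\cH)$, and conclude from the chain $\cT \subseteq \cF \Leftrightarrow C^*(\cT,\1) \subseteq \cF$. That $\1 \in \cF$ is trivial, and linearity of the commutator gives closure under linear combinations. Closure under products follows from the Leibniz identity $[AB,P_n] = A[B,P_n] + [A,P_n]B$ together with the module bounds $\|AX\|_2 \le \|A\|\|X\|_2$ and $\|XB\|_2 \le \|X\|_2\|B\|$. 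Closure under adjoints uses $[A^*,P_n] = -[A,P_n]^*$ and unitary invariance of the Hilbert--Schmidt norm. Finally, $\|[A-A_k,P_n]\|_2 \le 2\|A-A_k\|\|P_n\|_2$ gives closure under operator-norm limits via a routine $\varepsilon/2$ argument.

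For \textbf{(ii)}, two elementary ingredients drive all four equivalences. First, the Schatten inequalities
\[
\|X\|_2^2 \le \|X\|\cdot\|X\|_1, \qquad \|X\|_1 \le \sqrt{\Rank(X)}\,\|X\|_2,
\]
combined with $\|P_n\|_1 = \Rank(P_n) = \|P_n\|_2^2$ and the rank bounds $\Rank([A,P_n]) \le 2\Rank(P_n)$ and $\Rank((I-P_n)AP_n) \le \Rank(P_n)$, convert $p=2$ vanishing into $p=1$ vanishing, and conversely at the cost of a harmless factor of $\|A\|$. Second, the algebraic decomposition
\[
[A,P_n] = (I-P_n)AP_n - P_n A(I-P_n)
\]
exhibits the commutator as a difference of two operators whose ranges live in the orthogonal subspaces $(I-P_n)\cH$ and $P_n\cH$, and whose sources are also orthogonal. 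Passing to the $2\times 2$ block representation with respect to $\cH = P_n\cH \oplus (I-P_n)\cH$, the singular values of $[A,P_n]$ split as the disjoint union of those of $(I-P_n)AP_n$ and $P_n A(I-P_n)$, yielding the Pythagoras-type identity
\[
\|[A,P_n]\|_p^p = \|(I-P_n)AP_n\|_p^p + \|P_n A(I-P_n)\|_p^p
\]
for every Schatten $p$-norm. Selfadjointness of $\cA$, together with $P_n A(I-P_n) = ((I-P_n)A^*P_n)^*$ and the unitary invariance of $\|\cdot\|_p$, then allows us to swap $A$ and $A^*$, making the off-diagonal version for all $A \in \cA$ equivalent to the vanishing of both summands, hence to the commutator version.

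The only step that is not a one-line verification is the Pythagoras identity in Schatten norms, but once the orthogonality of ranges and of sources is observed, the $2 \times 2$ block computation $(X-Y)^*(X-Y) = \diag(X_0^*X_0,\,Y_0^*Y_0)$ (with $X_0\colon P_n\cH\to(I-P_n)\cH$ and $Y_0\colon(I-P_n)\cH\to P_n\cH$ the two nonzero blocks) makes it immediate. Given this identity and the Schatten interpolation estimates, conditions (\ref{F1})--(\ref{F2}) fit into a short chain of equivalences, and each is in turn equivalent to Definition~\ref{def:Foelner}(i) when $\cA$ is selfadjoint.
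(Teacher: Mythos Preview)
Your proof is correct and supplies the details that the paper omits: the paper merely says part~(i) is ``straightforward'' and cites B\'edos \cite{Bedos97} for part~(ii), whereas you give the standard C*-subalgebra argument for~(i) and the Schatten-norm/block-decomposition argument for~(ii), which is precisely what underlies B\'edos' lemma. Nothing to add.
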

\begin{proof}
Part (i) is straightforward and part (ii) is Lemma~1 in \cite{Bedos97}.
\end{proof}

The following proposition is shown by a standard argument.

\begin{proposition}\label{pro:equivalent}
Let $\mathcal{A}\subset\cL(\cH)$ be a separable C*-algebra. Then,
$\mathcal{A}$ has a F\o lner sequence if and only if $\mathcal{A}$
satisfies the F\o lner condition.
\end{proposition}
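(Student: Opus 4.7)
The plan is to prove this by a standard separability/diagonal argument that is symmetric to the analogous equivalence between the existence of a F{\o}lner sequence and the F{\o}lner condition for groups.

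The forward implication is immediate. If $\{P_n\}_{n\in\N}$ is a F{\o}lner sequence for $\cA$, then given any finite set $\cF\subset\cA$ and any $\varepsilon>0$, since condition (\ref{eq:F1}) holds individually for each of the finitely many $A\in\cF$, one can choose $n$ large enough so that $\|AP_n-P_nA\|_2/\|P_n\|_2<\varepsilon$ simultaneously for all $A\in\cF$, and then $P:=P_n$ witnesses (\ref{eq:FC}).

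For the converse, I would exploit separability. Fix a countable dense subset $\{A_k\}_{k\in\N}\subset\cA$ and, for each $n\in\N$, apply the F{\o}lner condition to the finite set $\cF_n:=\{A_1,\dots,A_n\}$ with tolerance $\varepsilon_n:=1/n$. This yields a non-zero finite rank orthogonal projection $P_n$ with
\[
 \frac{\|A_k P_n-P_n A_k\|_2}{\|P_n\|_2}<\frac{1}{n},\qquad k=1,\dots,n.
\]
I then claim that $\{P_n\}_{n\in\N}$ is a F{\o}lner sequence for $\cA$. Given $A\in\cA$ and $\varepsilon>0$, pick $k$ with $\|A-A_k\|<\varepsilon/4$. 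For every $n\geq k$ the triangle inequality together with the elementary bound $\|BP_n\|_2\leq\|B\|\,\|P_n\|_2$ yields
\[
 \frac{\|AP_n-P_nA\|_2}{\|P_n\|_2}\leq 2\|A-A_k\|+\frac{\|A_k P_n-P_n A_k\|_2}{\|P_n\|_2}<\frac{\varepsilon}{2}+\frac{1}{n},
\]
which is less than $\varepsilon$ for all sufficiently large $n$. Hence (\ref{eq:F1}) holds for $A$, and since $A\in\cA$ was arbitrary, $\{P_n\}$ is a F{\o}lner sequence.

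There is no real obstacle here; the only subtle point is the uniform control $\|BP_n\|_2\leq\|B\|\,\|P_n\|_2$ that allows one to pass from the dense sequence $\{A_k\}$ to arbitrary $A\in\cA$ without losing the normalization by $\|P_n\|_2$. This is why the approximation is carried out in the Hilbert--Schmidt norm on the left-hand side of (\ref{eq:F1}) while the denominator merely records the Hilbert--Schmidt norm of $P_n$; the bound is the standard fact that $\|\cdot\|_2$ is a bimodule norm over $\cL(\cH)$.
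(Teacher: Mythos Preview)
Your argument is correct and is precisely the ``standard argument'' the paper alludes to; in fact the paper does not spell out a proof at all, so your write-up fills in the details in the expected way.
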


\subsection{Quasidiagonality}
The existence of a \fo for a set of operators $\mathcal{T}$ is a
weaker notion than quasidiagonality. Recall that a (separable) set
of operators $\mathcal{T}\subset\mathcal{L}(\mathcal{H})$ is said to
be quasidiagonal if there exists an increasing sequence of finite-rank
projections $\{P_n\}_{n\in \N}$ converging strongly to $\1$ and such that
\begin{equation}\label{QD}
\lim_{n}\|T P_n-P_n T\|=0\;,\quad T\in\mathcal{T}\;.
 \end{equation}
The existence of proper F\o lner sequences can be understood as a
quasidiagonality condition, but relative to the growth of the
dimension of the underlying spaces. It can be easily shown that if
$\{P_n\}_n$ quasidiagonalizes a family of operators $\cT$, then this
sequence of non-zero finite rank orthogonal projections is also a
F\o lner sequence for $\cT$. In \cite{voicu91}, Voiculescu
characterized abstractly quasidiagonality for unital separable
C*-algebras in terms of u.c.p. maps (see also \cite{Voiculescu93}).
This has become by now the
standard definition of quasidiagonality for operator algebras (see,
for example, \cite[Definition~7.1.1]{bBrown08}):

\begin{definition}
\label{def:abstractqd} A unital separable C*-algebra $\cA$ is called
{\em quasidiagonal} if there exists a sequence of u.c.p. maps
$\varphi _n \colon \cA\to M_{k(n)}(\mathbb C)$ which is both
asymptotically multiplicative (i.e. $\| \varphi_n (AB) -\varphi_n
(A)\varphi_n (B) \| \to 0$ for all $A,B\in \cA$) and asymptotically
isometric (i.e. $\| A \| =\lim _{n\to \infty} \| \varphi_n (A) \| $
for all $A\in \cA$).
\end{definition}

The unilateral shift is a prototype that shows the difference
between the notions of F\o lner sequences and quasidiagonality. On
the one hand, it is a well-known fact that the unilateral shift $S$
is not a quasidiagonal operator. (This was shown by Halmos in
\cite{Halmos68}; in fact, in this reference it is shown that $S$ is
not even quasi-triangular.) In the setting of abstract C*-algebras
it can also be shown that a C*-algebra containing a proper
(i.e.~non-unitary) isometry is not quasidiagonal (see,
e.g.~\cite{BrownIn04,bBrown08}). It can be shown, though, that
certain weighted shifts are quasidiagonal (cf.~\cite{Smucker82}).

On the other hand, it is easy to give a F\o lner sequence for $S$. In fact, define
$S$ on $\cH:=\ell^2(\N_0)$ by $Se_i:=e_{i+1}$, where
$\{e_i\mid i=0,1,2,\dots\}$ is the canonical basis of $\cH$
and consider for any $n$ the orthogonal projections $P_n$ onto span$\{e_i\mid i=0,1,2,\dots, n\}$.
Then
\[
 \big\|[P_n,S]\big\|_2^2=\sum_{i=1}^\infty \Big\|[P_n,S]e_i\Big\|^2=\|e_{n+1}\|^2=1
\]
and
\[
\frac{\big\|[P_n,S]\big\|_2}{\| P_n\|_2}=\frac{1}{\sqrt{n+1}}\;\mathop{\longrightarrow}\limits_{n\to\infty} \;0\;.
\]

\section{Approximations of amenable traces}\label{sec:amenable}

The existence of F\o lner sequences for a concrete C*-algebra $\cA$ has several
operator algebraic consequences. The most prominent one is the existence of an
amenable trace on $\cA$. In this section we will review a particularly useful approximation
of an amenable trace and consider its application to a spectral approximation
problem.


We refer to Chapter~3 of \cite{Brown06a} for a careful analysis of subsets of the 
amenable traces that can be related to suitable finite-dimensional approximation
properties of the corresponding C*-algebra.
Fundamental results related to amenable traces were obtained by Kirchberg (using the name
liftable tracial state) in \cite{Kirchberg94}.

Let $\cA\subset\cL(\cH)$ be a unital C*-algebra. A state $\tau$ on $\cA$
is called an {\em amenable trace} if there exists a state $\psi$ on
$\cL(\cH)$ such that $\psi |_\cA=\tau$ and
\begin{equation*}
\psi(X A) = \psi(A X)\;,\quad X\in \cL(\cH)\;,\;A\in\cA\,.
\end{equation*}
The state $\psi$ is also referred to in the literature as a
hypertrace on $\cL(\cH)$. Amenable traces are the operator algebraic
analogues of the invariant means for groups mentioned at the
beginning of the preceding section
(cf.~\cite{Connes76,ConnesIn76,Bedos97,Brown06a}). Part (ii) of the
following result is known to experts (see e.g. Exercise~6.2.6 in
\cite{bBrown08}); part (i) is well known and stated in several
places in the literature. Since the result is very important for
this paper, and for convenience of the reader, we give a complete
proof of it.

\begin{proposition}\label{exercise}
Let $\cA\subset\cL(\cH)$ be a unital separable C*-algebra.
\begin{itemize}
\item[(i)] If $\cA$ has a F\o lner sequence
$\{P_n\}_n$, then $\cA$ has an amenable trace.

\item[(ii)]
Assume that $\cA\cap\cK(\cH)=\{0\}$, and let $\tau$ be an amenable
trace on $\cA$. Then $\cA$ has a F\o lner sequence $\{P_n\}_n$
satisfying
\begin{equation}\label{eq:conv-trace}
\tau(A)=\lim_{n\to\infty}\frac{\mathrm{Tr}(AP_n)}{\mathrm{Tr}(P_n)}\;,\quad A\in\cA \;,
\end{equation}
where $\mathrm{Tr}$ denotes the canonical trace on $\cL(\cH)$.
\end{itemize}
\end{proposition}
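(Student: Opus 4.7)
The natural plan is to take the normalized compression states along the F\o lner sequence and extract a cluster point. Define $\tau_n(X):=\mathrm{Tr}(XP_n)/\mathrm{Tr}(P_n)$ for $X\in\cL(\cH)$; each $\tau_n$ is a normal state, and by weak-* compactness of the state space of $\cL(\cH)$ there is a cluster point $\psi$. For $A\in\cA$ and $X\in\cL(\cH)$, cyclicity of the trace together with the standard estimate $|\mathrm{Tr}(YZ)|\le\|Y\|\,\|Z\|_1$ give
\[
|\tau_n([A,X])| \;=\; \frac{|\mathrm{Tr}(X[P_n,A])|}{\mathrm{Tr}(P_n)} \;\le\; \|X\|\,\frac{\|[A,P_n]\|_1}{\|P_n\|_1},
\]
which tends to zero by Proposition \ref{total}(ii) (the $\|\cdot\|_1$-version of the F\o lner condition, recalling $\|P_n\|_1=\mathrm{Tr}(P_n)$). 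Hence $\psi([A,X])=0$ for all such $A$ and $X$, so $\psi$ is a hypertrace and $\tau:=\psi|_\cA$ is an amenable trace.

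\textbf{Part (ii).} Following the scheme of \cite[Thm.~6.2.7]{bBrown08}, I would proceed in two main stages. Fix a hypertrace $\psi$ on $\cL(\cH)$ extending $\tau$. Since the normal states $\omega_D(X):=\mathrm{Tr}(XD)$, with $D\ge 0$ and $\mathrm{Tr}(D)=1$, form a weak-* dense convex subset of the state space of $\cL(\cH)$, one can approximate $\psi$ in the weak-* topology by a net of such $\omega_{D_\alpha}$. The identity $\mathrm{Tr}(D[A,X])=\mathrm{Tr}(X[D,A])$ combined with $\psi([A,X])=0$ then shows $[D_\alpha,A]\to 0$ in the topology $\sigma(\cL(\cH)_*,\cL(\cH))$ for every $A\in\cA$, while simultaneously $\mathrm{Tr}(AD_\alpha)\to\tau(A)$. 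A Day--Namioka / Hahn--Banach convexity argument, applied over a countable dense sequence in $\cA$, upgrades this weak convergence to trace-norm convergence after passing to convex combinations, producing a sequence $(D_n)$ with $D_n\ge 0$, $\mathrm{Tr}(D_n)=1$, $\|[A,D_n]\|_1\to 0$ and $\mathrm{Tr}(AD_n)\to\tau(A)$ for all $A\in\cA$. The hypothesis $\cA\cap\cK(\cH)=\{0\}$ enters here by allowing one to choose the approximants so that $\psi$ (and hence the limiting behaviour of the $D_n$) vanishes on $\cK(\cH)$, which is what eventually forces the ranks of the projections produced below to tend to infinity.

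\textbf{From densities to projections.} To discretize, write the spectral resolution $D_n=\int_0^\infty \1_{(t,\infty)}(D_n)\,dt$. Both $\mathrm{Tr}(D_n)$ and $[A,D_n]$ decompose accordingly as integrals of the corresponding quantities for the spectral projections $\1_{(t,\infty)}(D_n)$, so a Fubini plus pigeonhole argument selects a threshold $t_n$ such that the finite-rank projection $P_n:=\1_{(t_n,\infty)}(D_n)$ inherits both the trace approximation (\ref{eq:conv-trace}) and the F\o lner condition in $\|\cdot\|_1$, hence in $\|\cdot\|_2$ by Proposition \ref{total}(ii). I expect the main obstacle to be the middle stage of Part (ii): juggling the two distinct asymptotic conditions (almost-centrality in trace norm and weak-* approximation of $\tau$) under a single convexification, while maintaining positivity and the normalization $\mathrm{Tr}(D_n)=1$, is the delicate point where the hypothesis $\cA\cap\cK(\cH)=\{0\}$ becomes essential; the layer-cake averaging in the final step is more technical than conceptually hard once the almost-central densities are in hand.
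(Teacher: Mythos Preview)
Your Part~(i) is correct and matches the paper's argument exactly.

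For Part~(ii) your route differs fundamentally from the paper's and contains a genuine gap. The paper does \emph{not} run the Connes/Day--Namioka density-plus-layer-cake argument on $\cH$. Instead it invokes \cite[Theorem~6.2.7]{bBrown08} to obtain a u.c.p.\ map $\varphi\colon\cA\to M_k(\C)$ with $|\tau(B)-\mathrm{tr}(\varphi(B))|<\varepsilon$ and $|\mathrm{tr}(\varphi(B^*B)-\varphi(B)^*\varphi(B))|<\varepsilon$ on a given finite set, forms the Stinespring projection $P=VV^*$ on an auxiliary space $\Meuf{h}$, and then applies Voiculescu's theorem---this is precisely where $\cA\cap\cK(\cH)=\{0\}$ is used---to transport $P$ to a projection $Q=W^*(0\oplus P)W$ on $\cH$ via an approximate unitary equivalence between $\iota$ and $\iota\oplus\pi$. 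Both the F\o lner estimate and the trace approximation for $Q$ are then read off directly from the corresponding properties of $\varphi$ and $P$.

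The gap in your approach is the layer-cake step, not the convexification (which you correctly identify as workable). Passing from an almost-central density $D_n$ to a spectral projection $E_{t_n}=\1_{(t_n,\infty)}(D_n)$ by pigeonhole does recover the F\o lner condition---this is Connes' original argument---but it does \emph{not} preserve the trace approximation. From $\int_0^\infty \mathrm{Tr}\big((A-\tau(A)\1)E_t\big)\,dt=\mathrm{Tr}\big((A-\tau(A)\1)D_n\big)\approx 0$ you only learn that the average of $t\mapsto \mathrm{Tr}(AE_t)/\mathrm{Tr}(E_t)$ against the probability measure $d\mu=\mathrm{Tr}(E_t)\,dt$ is close to $\tau(A)$; there is no variance or absolute-value bound, so no Markov/Chebyshev argument singles out a level $t$ at which the ratio itself is close to $\tau(A)$. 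This is exactly the phenomenon the authors flag in the introduction: the Day--Namioka technique ``loses track of the initial amenable trace~$\tau$'', which is why they pass through finite-dimensional u.c.p.\ maps and Voiculescu instead. Relatedly, your account of the role of $\cA\cap\cK(\cH)=\{0\}$ is off: it is not used to make $\psi$ vanish on compacts or to force ranks to diverge, but is the hypothesis needed to invoke Voiculescu's theorem in the transport step.
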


\begin{proof}
The proof of part (i) is a standard argument. Let $\{P_n\}_n$ be a
F\o lner sequence for $\cA$. Consider the following canonical
sequence of states of $\cL(\cH)$
\[
 \psi_n(X):=\frac{\mathrm{Tr}(XP_n)}{\mathrm{Tr}(P_n)}\;,\quad X\in\cL(\cH)\;.
\]
Using Eq.~(\ref{F1}) with $p=1$ it is an
$\frac{\varepsilon}{2}$-argument to show that any weak-* cluster
point of the sequence $\psi_n$ (which exists by weak-* compactness)
defines a hypertrace on $\cA$.

Part~(ii) requires several steps. It is enough to show that
for any finite selfadjoint set 
$\cF\subset\cA$ and any $1> \varepsilon>0$ there exists a finite
rank orthogonal projection $Q\in\cL(\cH)$ such that
\begin{equation}\label{eq:Q}
 \frac{\|B Q-Q B\|_2}{\|Q\|_2} < \varepsilon
 \quad\mathrm{and}\quad
 \left|\tau(B)-\frac{\mathrm{Tr}(B Q)}{\mathrm{Tr(Q)}}\right| < \varepsilon
 \;\;,\quad B\in\cF
\end{equation}
(cf.~Definition~\ref{def:Foelner} and Proposition~\ref{pro:equivalent}).

Let $\cF\subset\cA$ and $1> \varepsilon>0$ be given as before.
First, from Stinespring's theorem and the proof of Theorem~6.2.7 in
\cite{bBrown08} there exists a u.c.p. map
\[
 \varphi\colon\cA\to M_k(\C)
\]
(where $M_k(\C)\cong\cL(\cH_k)$ and $\mathrm{dim}\cH_k=k$),
an isometry
\[
 V\colon\cH_k\to\Meuf{h}
\]
and a representation
\[
 \pi\colon\cA\to\cL(\Meuf{h})
\]
satisfying
\begin{eqnarray}
 \varphi(A)&=& V^*\pi(A) V\;,\quad\;\;A\in\cA\;. \label{eq:VAV} \\[3mm]
|\mathrm{tr}\left(\varphi(B^*B)-\varphi(B^*)\varphi(B)\right)|
           &<&\varepsilon\;, \quad\;\;B\in\cF \;. \label{eq:approxvarphi}\\[3mm]
 \left|\tau(B)-\mathrm{tr}(\varphi(B))\right|
           &<& \varepsilon\;,\quad \;\;B\in\cF\;,\label{eq:3}
\end{eqnarray}
where $\mathrm{tr}(\cdot)$ is the unique tracial state on the matrix algebra. We introduce next Stinespring's projection
\[
 P:=VV^*\;,
\]
which is a finite rank projection in $\cL(\Meuf{h})$. Using the relation
$P\pi(A)P=V\varphi(A)V^*$, $A\in\cA$, it is straightforward to show that
\begin{equation}\label{eq:off-block}
 \frac{\|(\1-P)\pi(B)P\|_2}{\|P\|_2} < \sqrt{\varepsilon} \;\;,\quad B\in\cF\;.
\end{equation}

The second step in the proof makes use of Voiculescu's theorem as
stated, e.g., in \cite[\S~1.7]{bBrown08}. Consider the inclusion
$\iota\colon\cA\to\cL(\cH)$. Since $\cA\cap\cK(\cH)=\{0\}$ we have
that $\iota$ and $\iota\oplus\pi$ are approximately unitarily
equivalent relative to compacts. In particular, there is a unitary
$W\colon\cH\to\cH\oplus\Meuf{h}$ such that
\begin{equation}\label{eq:Voicu}
 \left\|B-W^*(B\oplus \pi(B))W\right\|<\varepsilon\;,\quad B\in\cF\;.
\end{equation}
Define the orthogonal projection $Q$ on $\cH$ by
\[
 Q:=W^* (0\oplus P) W
\]
and note that $\|Q\|_2=\|P\|_2$, where the Hilbert-Schmidt norms are
considered on the Hilbert spaces $\cH$ and $\Meuf{h}$, respectively.
Putting $Q^\perp:=\1-Q$ and using Eqs.~(\ref{eq:off-block}) and
(\ref{eq:Voicu}) we have the following estimates for any $B\in\cF$:
\begin{eqnarray*}
\|Q^\perp BQ\|_2  &\leq& \|Q^\perp (B-W^*(B\oplus \pi(B))W) Q\|_2  
                  + \|Q^\perp (W^*(B\oplus \pi(B))W) Q\|_2\\[3mm]
                &\leq& \|B-W^*(B\oplus \pi(B))W\|\;\|Q\|_2 +
                       \| W^*(\1\oplus P^\perp)(B\oplus \pi(B))(0\oplus P) W)\|_2\\[3mm]
                &\leq& \varepsilon \;\|Q\|_2 + \|P^\perp \,\pi(B)\, P\|_2 \\[3mm]
                &\leq& 2\sqrt{\varepsilon} \;\|Q\|_2  \;.
\end{eqnarray*}
Since $\cF ^*=\cF$, we obtain the first condition of
Eq.~(\ref{eq:Q}) (with $4\sqrt{\varepsilon}$ instead of $\varepsilon $).

We still have to show the second condition in Eq.~(\ref{eq:Q}). Note
that for any $B\in\cF$ we have
\begin{eqnarray*}
 \mathrm{tr}(\varphi(B))&=& \mathrm{tr}(V^*\pi(B)V)\;\;=\;\;\frac{\mathrm{Tr}(P\pi(B))}{\mathrm{Tr}(P)}\\[3mm]
                        &=& \frac{\mathrm{Tr}((0\oplus P)(B\oplus\pi(B)))}{\mathrm{Tr}(P)}\\[3mm]
                        &=& \frac{\mathrm{Tr}(Q(W^* (B\oplus\pi(B)) W))}{\mathrm{Tr}(Q)}\;.
\end{eqnarray*}
Finally, we can use the previous relation as well as Eqs.~(\ref{eq:3}) and (\ref{eq:Voicu}) to show the estimates
\begin{eqnarray*}
\left|\tau(B)-\frac{\mathrm{Tr}(Q B)}{\mathrm{Tr(Q)}}\right|
           &\leq& |\tau(B)-\mathrm{tr}(\varphi(B))|+\left|\mathrm{tr}(\varphi(B)) -\frac{\mathrm{Tr}(Q B)}{\mathrm{Tr(Q)}}\right| \\[3mm]
           &\leq& \varepsilon + \left|
                                \frac{\mathrm{Tr}\Big(Q \left(W^*( B\oplus\pi(B)  )W- B\right)\Big)}{\mathrm{Tr(Q)}}
                                \right|\\[3mm]
           &\leq& \varepsilon + \|W^*( B\oplus\pi(B)  )W- B \| \;\;<\;\; 2\varepsilon\;,
\end{eqnarray*}
and the proof is concluded.
\end{proof}

\subsection{Approximation of spectral measures} \label{subsec:spec}

We will now present an application of
Proposition~\ref{exercise}~(ii) to spectral approximation. The
argument in the proof of Theorem~\ref{teo:apply} below will be used
later in the proof of our main characterization result (Theorem
\ref{thm:characFolner}).

We need to recall from \cite{Bedos97} the definition of Szeg\"o
pairs for a concrete C*-algebra $\cA\subset\cL(\cH)$. This notion
incorporates the good spectral approximation behavior of scalar
spectral measures of selfadjoint elements in $\cA$ and is motivated
by Szeg\"o's classical approximation results mentioned in the introduction.

Let $\cA$ be a unital C*-algebra acting on $\cH$ and let $\tau$ be a
tracial state on $\cA$. For any self\-adjoint element $T\in\cA$ we
denote by $\mu_T$ the spectral measure associated with the trace
$\tau$ of $\cA$. Consider a sequence $\{P_n\}_n$ of non-zero finite
rank projections on $\cH$ and write the corresponding (selfadjoint)
compressions as $T_n:=P_n T P_n$. Denote by $\mu_T^n$ the
probability measure on $\R$ supported on the spectrum of $T_n$,
i.e.,
\[
 \mu_T^n(\Delta):=\frac{N_T^n(\Delta)}{d_n}\;,\quad \Delta\subset\R\quad \mathrm{Borel}\;,
\]
where $N_T^n(\Delta)$ is the number of eigenvalues of $T_n$
(multiplicities counted) contained in $\Delta$ and
$d_n$ is the dimension of the subspace $P_n\cH$.

We say that $\left\{
\{P_n\}_n \,,\,\tau\right\}$ is a {\it Szeg\"o pair} for $\cA$ if
$\mu_T^n\to \mu_T$ weakly for all selfadjoint elements $T\in\cA$,
i.e.,
\[
 \lim_{n\to\infty}
  \frac{1}{d_n}
  \Big( f(\lambda_{1,n})+\dots+f(\lambda_{d_n,n})\Big) =\int f(\lambda) \, d\mu_T(\lambda)
  \;,\quad f\in C_0(\R)  \;,
\]
where $\{\lambda_{1,n},\dots,\lambda_{d_n,n}\}$ are the eigenvalues
(repeated according to multiplicity) of $T_n$.

By \cite[Theorem 6~(i),(ii)]{Bedos97}, if $\left\{ \{P_n\}_n
\,,\,\tau\right\}$ is a Szeg\"o pair for $\cA$, then $\{P_n\}_n$
must be a F\o lner sequence for $\cA$, $\tau$ must be an amenable
trace, and equation (\ref{eq:conv-trace}) must hold for every $A\in
\cA$. Proposition~\ref{exercise}~(ii) allows to complete {\it any}
amenable trace $\tau$ on $\cA$ with a F\o lner sequence so that the
pair $\left\{ \{P_n\}_n \,,\,\tau\right\}$ is a {\it Szeg\"o pair}
for $\cA$, as follows.

\begin{theorem}\label{teo:apply}
Let $\cA$ be a unital, separable C*-algebra acting on a separable Hilbert
space $\cH$, and assume that $\cA \cap \cK (\cH)= \{ 0 \}$.  If
$\tau$ is an amenable trace on $\cA$, then there exists a proper
F\o lner sequence $\{P_n\}_n$ such that $\left\{ \{P_n\}_n
\,,\,\tau\right\}$ is a Szeg\"o pair for $\cA$.
\end{theorem}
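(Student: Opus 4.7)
My plan is to combine Proposition~\ref{exercise}(ii), which already produces a F\o lner sequence satisfying (\ref{eq:conv-trace}), with an enlargement procedure that makes the sequence proper, and then to invoke the converse direction of \cite[Theorem~6]{Bedos97} to deduce the Szeg\"o pair conclusion.

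The central technical ingredient will be the following enlargement lemma: \emph{given any finite-rank projection $R$ on $\cH$, any finite self-adjoint set $\cF\subset\cA$, and any $\varepsilon>0$, there exists a finite-rank projection $Q\geq R$ satisfying both estimates of (\ref{eq:Q}) for $\cF$ and $\varepsilon$.} To prove this, I would first observe that the projection produced by Proposition~\ref{exercise}(ii) can be arranged to have arbitrarily large rank: its rank equals the Stinespring dimension $k$, and replacing $\varphi$ by the block-diagonal amplification $\varphi^{\oplus m}\colon\cA\to M_{mk}(\C)$ preserves all of (\ref{eq:VAV})--(\ref{eq:3}) while inflating the rank by a factor $m$. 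I would therefore pick a $Q_0$ furnished by that proposition with parameter $\varepsilon/2$ and $\mathrm{rank}(Q_0)\gg \max_{B\in\cF}\|B\|^2\,\mathrm{rank}(R)/\varepsilon^2$, and set $Q:=Q_0\vee R$. Since $Q-Q_0\geq 0$ has rank at most $\mathrm{rank}(R)$, we have $\|Q-Q_0\|_2\leq\sqrt{\mathrm{rank}(R)}$ and $\|Q-Q_0\|_1\leq\mathrm{rank}(R)$, so the triangle inequality gives
\[
\|[B,Q]\|_2\;\leq\;\|[B,Q_0]\|_2+2\|B\|\sqrt{\mathrm{rank}(R)}\qquad\text{and}\qquad |\mathrm{Tr}(BQ)-\mathrm{Tr}(BQ_0)|\leq\|B\|\,\mathrm{rank}(R),
\]
and combining these with $\mathrm{Tr}(Q)\geq\mathrm{Tr}(Q_0)\gg\mathrm{rank}(R)$ yields both conditions of (\ref{eq:Q}) for $Q$.

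With this lemma I would build $\{P_n\}_n$ inductively. Fix a countable dense set $\{A_k\}_k\subset\cA$ (possible by separability) and an increasing sequence $\{E_n\}$ of finite-rank projections on $\cH$ with $E_n\nearrow\1$ strongly. Setting $P_0:=0$, at step $n$ I would apply the enlargement lemma with $R:=P_{n-1}\vee E_n$, with $\cF$ the self-adjoint parts of $\{A_1,\ldots,A_n\}$, and with $\varepsilon:=1/n$, to obtain $P_n\geq P_{n-1}\vee E_n$. The sequence is then increasing and dominates $E_n\to\1$, hence proper; and a standard $3\varepsilon$/density argument extends the F\o lner and trace-approximation conditions from $\{A_k\}_k$ to all of $\cA$, so in particular (\ref{eq:conv-trace}) holds for every $A\in\cA$.

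Finally, for the Szeg\"o pair conclusion I would invoke the converse direction of \cite[Theorem~6]{Bedos97}: a F\o lner sequence satisfying (\ref{eq:conv-trace}) automatically yields $\mu_T^n\to\mu_T$ weakly for each self-adjoint $T\in\cA$. The mechanism is that $\int p\,d\mu_T^n=\mathrm{Tr}(p(P_nTP_n))/\mathrm{Tr}(P_n)$ for any polynomial $p$, and the trace-class form of the F\o lner condition lets one replace $p(P_nTP_n)$ by $P_np(T)$ in normalized traces up to an $o(1)$ error, after which (\ref{eq:conv-trace}) delivers convergence to $\tau(p(T))=\int p\,d\mu_T$; Stone--Weierstrass then upgrades polynomials to $C_0(\R)$. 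The main obstacle is the enlargement lemma — specifically, arranging $Q_0$ of rank large enough that the correction $Q-Q_0$ is dimensionally negligible both for Hilbert--Schmidt commutator bounds and for normalized traces; once that is in place the remaining inductive and density arguments are routine.
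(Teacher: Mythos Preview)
Your proposal is correct and follows essentially the same route as the paper's proof: both obtain from Proposition~\ref{exercise}(ii) projections of arbitrarily large rank via the block-diagonal amplification $\varphi\mapsto\varphi^{\oplus m}$, build the proper sequence inductively by taking joins with a fixed exhaustion $E_n\nearrow\1$ (the paper's $R_n$) and with the previously constructed $P_{n-1}$, control the perturbation by making the ``good'' projection dimensionally dominate the correction, and finally appeal to \cite[Theorem~6]{Bedos97} for the Szeg\"o conclusion. The only difference is cosmetic: you isolate the step as a standalone enlargement lemma and then iterate it, whereas the paper carries out the same estimates directly inside the induction.
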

\begin{proof}
 By using the same arguments as in the proof of Proposition
\ref{exercise}(ii), we get that the following local condition is
satisfied: For every finite selfadjoint set $\cF$ of $\cA$, and for
every $\varepsilon >0$, there exists a finite rank orthogonal
projection $Q\in \cL (\cH)$ such that
\[
\frac{\| [Q, A] \|_2}{\|Q\|_2} <\varepsilon \quad\mathrm{and}\quad
\left|\tau(A)-\frac{\mathrm{Tr}(Q A)}{\mathrm{Tr(Q)}}\right|<\varepsilon
\quad\mathrm{for~all}\quad
A\in\cF\;.
\]

>From this local condition, we are going to construct an increasing sequence
$\{P_n\}_n$ such that $P_n\nearrow \1$ in the strong operator topology and
such that
\[
\lim _n \frac{\| [P_n,A]\|_2}{ \| P_n\|_2} = 0  \;, \qquad
\tau(A)= \lim _n \frac{\mathrm{Tr}(P_n A)}{\mathrm{Tr}(P_n)}
\quad\mathrm{for~all}\quad
A\in\cA\;.
\]
Take a countable dense subset $\{A_1,A_2,\dots
\}$ of $\cA$, with $A_i\ne 0$ for all $i$. Take $\varepsilon _n=
2^{-n}$ for all $n\ge 1$ and let $Q_n$ be a finite rank orthogonal
projection such that
\[
 \frac{\| [Q_n, A_i]\|_2 }{\| Q_n\|_2} < \varepsilon _n
  \quad\mathrm{and}\quad
 \left|\tau(A_i)-\frac{\mathrm{Tr}(Q_n A_i)}{\mathrm{Tr}(Q_n)}\right|<\varepsilon_n
   \quad\mathrm{for}\quad  i=1,\dots ,n .
\]
We will show next that we may also assume that
\[
\text{dim} \big(Q_n(\cH )\big) \underset{n\to \infty}{\longrightarrow} \infty.
\]
In fact, recall from the proof of Proposition~\ref{exercise}~(ii)
that the dimension of $Q_n$ coincides with the dimension of Stinespring's projection
associated to the corresponding u.c.p. map
$\varphi_n\colon \cA \to M_{k(n)}(\C )$. Since we can replace $\varphi_n$
with a finite direct sum of $n$ copies of $\varphi_n$, without changing the fundamental
estimates (\ref{eq:approxvarphi}) and (\ref{eq:3}), we obtain our claim.

Now consider a sequence $\{ R_n\}_n$ of finite-rank orthogonal
projections such that $R_n\nearrow \1$. Take $P_1=Q_1$, $R_1=Q_1$ and
assume that $P_1,\dots ,P_n$ have been constructed so that the following conditions hold:
\begin{enumerate}
\item $R_i\le P_i$ for $i=1,\dots ,n$.\vspace{1mm}
\item $P_1\le P_2\le \cdots \le P_n$. \vspace{1mm}
\item $\| [P_i,A_j ]\|_2 <\varepsilon _i\|P_i \|_2$ for $1\le j\le i\le n $.\vspace{3mm}
\item  $\left|\tau(A_i)-\frac{\mathrm{Tr}(P_n A_i)}{\mathrm{Tr}(P_n)}\right|<\varepsilon _n$ for $1\le i\le n$.
\end{enumerate}
Since $\text{dim} \big(Q_l(\cH )\big) \underset{l\to
\infty}{\longrightarrow} \infty$,  we may take $m>n+1$ such
that\footnote{If $P,Q$ are orthogonal projections on $\cH$ we denote
by $P\vee Q$ the orthogonal projection onto the closure of
span$\,\{P\cH\cup Q\cH\}$.}
\begin{equation}\label{eq:Qgrow}
\| Q_m \|_2\geq \frac{4 \, \| R_{n+1}\vee P_n \|_2 }{\varepsilon _{n+1}}\,\text{max}\{ 1,  \| A_1\|, \dots , \|A_{n+1} \|\}\;.
\end{equation}

Set $P_{n+1} := R_{n+1} \vee P_n\vee Q_m $ and we have to show that the corresponding Eqs.~$(1)-(4)$ above are also
true for step $n+1$. Clearly $R_{n+1}\le
P_{n+1}$ and $P_n\le P_{n+1}$. We can write $P_{n+1}= Q_m\oplus
P'_{n+1}$ with $\| P'_{n+1} \|_2 \le \| R_{n+1}\vee P_n \|_2$. For
$i=1,\dots , n+1$, we have
\begin{align*}
\frac{\|[ P_{n+1}, A_i ]\|_2}{ \|P_{n+1} \|_2} & \le \frac{\| [ Q_m,
A_i ]\|_2}{\| P_{n+1} \|_2} +\frac{ \| [P'_{n+1}, A_i]\|_2}{\|
P_{n+1} \|_2} \\[2mm]
& \le \frac{\| [ Q_m, A_i ]\|_2}{\| Q_{m} \|_2} +\frac{2\cdot
\|A_i\| \cdot  \| P'_{n+1}\|_2}{\|
Q_{m} \|_2} \\[2mm]
& < \frac{\varepsilon _{n+1}}{2}+ \frac{\varepsilon _{n+1}}{2}
=\varepsilon _{n+1}\;,
\end{align*}
where for the last estimate we have used (\ref{eq:Qgrow}).

Finally, we still have to show condition (4) that implies $P_{n+1}$
is also a good approximation of the amenable trace.
Write $\alpha := \frac{\mathrm{Tr}(Q_m)}{\mathrm{Tr}(P_{n+1})} < 1$.
Then using again (\ref{eq:Qgrow}) note that
$$| 1-\alpha | = \frac{\| P'_{n+1} \|_2^2}{\| P_{n+1} \|_2^2}\le
\frac{\varepsilon_{n+1}^2}{16} \Big(\text{max}\{ 1,  \| A_1\|^2, \dots ,
\|A_{n+1} \|^2\}\Big)^{-1} .$$

Hence using again the decomposition $P_{n+1}=Q_m\oplus P'_{n+1}$ we have
for $i=1,\dots , n+1$:
\begin{align*}
\left| \tau(A_i)  -\frac{\mathrm{Tr}(A_i
P_{n+1})}{\mathrm{Tr}(P_{n+1})}\right|  & \le \left|
\tau(A_i)-\frac{\mathrm{Tr}(A_i Q_m)}{\mathrm{Tr}(P_{n+1})}\right| +
\frac{ |\mathrm{Tr}(A_iP'_{n+1})|}{\mathrm{Tr}(P_{n+1})} \\[2mm]
& \le  \left|\tau(A_i)-\frac{\mathrm{Tr}(A_i
Q_m)}{\mathrm{Tr}(Q_m)}\right| +\left|
\frac{\mathrm{Tr}(A_iQ_m)}{\mathrm{Tr}(Q_m)}(1- \alpha)\right| +
\frac{|\mathrm{Tr}(A_iP_{n+1}')|}{\mathrm{Tr} (P_{n+1})} \\[2mm]
& \le \varepsilon _m + 2\|A_i\|\cdot
\frac{\varepsilon_{n+1}^2}{16}\cdot \left(\mathop{\mathrm{max}}_i\{ 1,\|A_i\|^2 \}\right)^{-1}
\\
& \le \varepsilon _m + \frac{\varepsilon _{n+1}^2}{8} \\[2mm]
& \le \frac{\varepsilon_{n+1}}{2}+ \frac{\varepsilon_{n+1}^2}{8} <
\varepsilon_{n+1}.
\end{align*}

It follows that $P_n\nearrow \1$ and that $\lim _n \frac{\|[P_n,
A]\|_2}{\|P_n\|_2} = 0$ for all $A\in \cA$.

\smallskip

Now the proof of Theorem~6~(iii) in \cite{Bedos97} gives that
$\left\{ \{ P_n \}, \tau \right\}$ is a Szeg\" o pair for $\cA$.
\end{proof}

\begin{remark}
The preceding theorem is a contribution to the study of Szeg\"o-type
theorems in the context of C*-algebras. Note, nevertheless, that the existence
a F\o lner sequence approximating nicely the amenable trace is  
established in abstract terms. This gives in general 
no clue of what the matrix approximations of concrete
operators are. It would be interesting to construct in concrete cases 
explicit F\o lner sequences of this type to address spectral approximation
problems in this more general context (see, e.g., Chapter~7 in \cite{bHagen01}). 
\end{remark}

\section{F\o lner C*-algebras}\label{sec:FA}

In this section, we introduce the abstract definition of a F\o lner
C*-algebra and we obtain our main result characterizing F\o lner
C*-algebras in terms of F\o lner sequences and also of amenable
traces. Moreover, we state some consequences for tensor products and
nuclear C*-algebras.

We denote by $\mathrm{tr}(\cdot)$ the unique tracial state on a matrix algebra
$M_{n}(\C)$.
\begin{definition}\label{def:FA}
Let $\cA$ be a unital, separable C*-algebra.
\begin{itemize}
 \item[(i)] We say that $\cA$ is a {\em F\o lner C*-algebra} if there exists a sequence of u.c.p. maps
$\varphi_n\colon\cA\to M_{k(n)}(\C)$ such that
\begin{equation}\label{eq:mult-2}
\lim_n\|\varphi_n(AB)-\varphi_n(A)\varphi_n(B)\|_{2,\mathrm{tr}}=0\;,\quad A,B\in\cA\;,
\end{equation}
where $\|F\|_{2,\mathrm{tr}}:=\sqrt{\mathrm{tr}(F^*F)}$, $F\in M_{n}(\C)$\;.
 \item[(ii)]  We say that $\cA$ is a {\em proper F\o lner C*-algebra} if there exists a sequence of u.c.p. maps
  $\varphi_n\colon\cA\to M_{k(n)}(\C)$ satisfying the previous Eq.~(\ref{eq:mult-2}) and which, in addition,
  are asymptotically isometric, i.e.,
\begin{equation}\label{eq:norm}
   \|A\|=\lim_n\|\varphi_n(A)\|\;,\quad A\in\cA\;.
\end{equation}
\end{itemize}
\end{definition}

It is clear that if $\cA$ is a separable, unital and quasidiagonal
C*-algebra (cf.~Definition \ref{def:abstractqd}), then $\cA$ is a
proper F\o lner algebra. 
Moreover, let $\cB$ be a {\em unital} C*-subalgebra of $\cA$.
Clearly, if $\cA$ is a (proper) F\o lner algebra, then $\cB$ is
again a (proper) F\o lner algebra. This is not true if $\cB$ is a
non-unital C*-subalgebra (i.e.~$\1_{\cA}\notin\cB$).

Although, in principle, the two concepts--F\o lner and properly F\o
lner--seem to be different, we can show that they indeed define the
same class of unital, separable C*-algebras:

\begin{proposition}
\label{prop:F=PF} Let $\cA$ be a unital separable C*-algebra. Then
$\cA$ is a F\o lner C*-algebra if and only if $\cA$ is a proper F\o
lner C*-algebra.
\end{proposition}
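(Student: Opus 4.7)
The ``if'' direction is immediate from Definition~\ref{def:FA}. For the ``only if'' direction, my plan is to manufacture a concrete faithful representation of $\cA$ that inherits a F\o lner sequence from the given u.c.p.\ maps $\varphi_n$, apply Theorem~\ref{teo:apply} to upgrade it to a proper F\o lner sequence, and then read off the desired asymptotically isometric u.c.p.\ maps as its compressions.

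First I would Stinespring-dilate each $\varphi_n$: write $\varphi_n(A)=V_n^*\pi_n(A)V_n$ with $\pi_n\colon\cA\to\cL(\cH_n)$ a unital representation and $V_n\colon\C^{k(n)}\to\cH_n$ an isometry. Fix also a faithful representation $\sigma\colon\cA\to\cL(\cH_\sigma)$ with $\sigma(\cA)\cap\cK(\cH_\sigma)=\{0\}$ (e.g.~the infinite amplification of any faithful representation), and assemble
\[
  \pi:=\sigma\oplus\bigoplus\nolimits_n\pi_n\colon\cA\to\cL(\cH),
  \qquad \cH:=\cH_\sigma\oplus\bigoplus\nolimits_n\cH_n.
\]
Then $\pi$ is faithful and $\pi(\cA)\cap\cK(\cH)=\{0\}$. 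Let $P_n\in\cL(\cH)$ be the finite-rank projection onto $V_n(\C^{k(n)})$ sitting in the $n$-th summand, so that $P_n\pi(A)P_n$ is unitarily equivalent to $\varphi_n(A)$. Using $\varphi_n(A^*)=\varphi_n(A)^*$ and the identification $\|F\|_{2,\mathrm{tr}}=\|F\|_2/\|P_n\|_2$ for $F\in M_{k(n)}(\C)$, a short computation yields
\[
  \frac{\|(\1-P_n)\pi(A)P_n\|_2^2}{\|P_n\|_2^2}
   = \mathrm{tr}\bigl(\varphi_n(A^*A)\bigr)-\mathrm{tr}\bigl(\varphi_n(A)^*\varphi_n(A)\bigr),
\]
which tends to $0$ by Cauchy--Schwarz and the hypothesis $\|\varphi_n(A^*A)-\varphi_n(A^*)\varphi_n(A)\|_{2,\mathrm{tr}}\to 0$. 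By Proposition~\ref{total}, $\{P_n\}_n$ is therefore a F\o lner sequence for $\pi(\cA)$.

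At this point Proposition~\ref{exercise}(i) furnishes an amenable trace on $\pi(\cA)$, and since $\pi(\cA)\cap\cK(\cH)=\{0\}$, Theorem~\ref{teo:apply} yields a proper F\o lner sequence $\{Q_n\}_n$ for $\pi(\cA)$ with $Q_n\nearrow\1$ strongly. Setting $\psi_n(A):=Q_n\pi(A)Q_n\in\cL(Q_n\cH)\cong M_{d(n)}(\C)$, each $\psi_n$ is u.c.p. Asymptotic multiplicativity in $\|\cdot\|_{2,\mathrm{tr}}$ follows from the F\o lner estimate
\[
  \|\psi_n(AB)-\psi_n(A)\psi_n(B)\|_{2,\mathrm{tr}}
   \le \|A\|\,\frac{\|(\1-Q_n)\pi(B)Q_n\|_2}{\|Q_n\|_2}\longrightarrow 0,
\]
while asymptotic isometry $\|\psi_n(A)\|\to\|A\|$ follows from $Q_n\nearrow\1$ strongly together with the faithfulness of $\pi$, by the standard trick of projecting near-maximizing unit vectors for $\pi(A)$ onto $Q_n\cH$.

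The main obstacle is setting up $\pi$ so that both (i)~$\{P_n\}_n$ is a bona fide F\o lner sequence for $\pi(\cA)$, realizing each $\varphi_n$ as a compression of $\pi$, and (ii)~$\pi(\cA)\cap\cK(\cH)=\{0\}$, which is the standing hypothesis of Theorem~\ref{teo:apply}. The ample summand $\sigma$ is included precisely to secure~(ii); once both conditions are in hand, the remainder of the argument is a routine assembly of the earlier results of the paper.
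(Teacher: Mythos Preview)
Your argument is correct, but it takes a substantially heavier route than the paper's. The paper gives a completely elementary proof: after amplifying the $\varphi_n$ so that $n/k(n)\to 0$, it fixes any faithful representation $\pi\colon\cA\to\cL(\cH)$, chooses finite-rank projections $P_n\nearrow\1$ with $\dim P_n\cH=n$, and simply sets $\psi_n(A)=\varphi_n(A)\oplus P_n\pi(A)P_n\in M_{k(n)+n}(\C)$. The second summand forces asymptotic isometry, while its contribution to the multiplicative defect is controlled by $n/(k(n)+n)\to 0$, so asymptotic multiplicativity in $\|\cdot\|_{2,\mathrm{tr}}$ survives from the $\varphi_n$. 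No Stinespring dilation, no amenable traces, no Voiculescu theorem.

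Your approach instead passes through Proposition~\ref{exercise}(i) and Theorem~\ref{teo:apply}, the latter of which relies on Voiculescu's theorem. This is logically sound and not circular (Theorem~\ref{teo:apply} is proved independently of Proposition~\ref{prop:F=PF}), and it has the conceptual merit of showing directly that a F\o lner C*-algebra admits a faithful essential representation with a proper F\o lner sequence --- essentially the implication (vi)$\Rightarrow$(iii) of Theorem~\ref{thm:characFolner}. But for the bare statement of Proposition~\ref{prop:F=PF}, the paper's direct-sum trick is both shorter and avoids invoking deep approximation theorems.
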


\begin{proof}
Assume that $\cA$ is a F\o lner C*-algebra, and let $\varphi _n
\colon \cA \to M_{k(n)}(\C )$ be a sequence of u.c.p maps such that
(\ref{eq:mult-2}) holds. Considering the direct sum of a
sufficiently large number of copies of $\varphi_n$, for each $n$, we
may assume that
\begin{equation}
\label{eq:noverk(n)goesto0} \lim _{n\to \infty} \frac{n}{k(n)}= 0.
\end{equation}
Let $\pi \colon \cA \to \cL (\cH )$ be a faithful representation of
$\cA$ on a separable Hilbert space $\cH$. Let $\{ P_n\}_n$ be an
increasing sequence of orthogonal projections on $\cH$, converging
to $\1$ in the strong operator topology and such that $\text{dim}
(P_n(\cH )) =n$ for all $n$. Then for all $A\in \cA$ we have $\| A\|
=\lim _n \| P_n\pi (A) P_n \| $.  Let $\psi _n \colon \cA \to
M_{k(n)+n}(\C) $ be given by:
$$\psi _n (A) = \varphi _n (A) \oplus P_n \pi (A) P_n ,$$
for $A\in \cA$. Then $\psi _n$ is a u.c.p. map. For $A,B\in \cA$,
set $X_n= P_n \pi (A)(1-P_n)\pi (B) P_n $. Then we have
\begin{align*}
\| \psi _n (AB) -\psi _n(A) & \psi _n(B)  \| _{2, \mathrm{tr}}^2 \le
\| \varphi _n (AB) -\varphi _n(A) \varphi _n(B) \| _{2,
\mathrm{tr}}^2
+ \frac{ \mathrm{Tr} (X_n^*X_n)}{k(n)+n}\\
& \le \| \varphi _n (AB) -\varphi _n(A) \varphi _n(B) \| _{2,
\mathrm{tr}}^2 + \frac{n \cdot \|A\|^2 \cdot \|B\|^2}{k(n)+n} \;.
\end{align*}
Using (\ref{eq:noverk(n)goesto0}) we get
\begin{equation*}
\lim_n\|\psi_n(AB)-\psi_n(A)\psi_n(B)\|_{2,\mathrm{tr}}=0.
\end{equation*}
On the other hand, for $A\in \cA$, we have
$$
\|A \| -\|\psi _n (A) \|  \le  \| A \| - \| P_n\pi (A)P_n\| \to 0
$$
so that (\ref{eq:norm}) holds for the sequence $(\psi _n )$. This
concludes the proof.
\end{proof}

For the next result recall that a representation $\pi$ of an
abstract C*-algebra $\cA$ on a Hilbert space $\cH$ is called {\em
essential} if $\pi(\cA)$ contains no nonzero compact operators.

\begin{theorem} \label{thm:characFolner} Let $\cA$ be a unital
separable C*-algebra. Then the following conditions are equivalent:
\begin{itemize}
 \item[(i)] There exists a faithful representation $\pi\colon\cA\to\cL(\cH)$ such that $\pi(\cA)$ has a
  F\o lner sequence.
 \item[(ii)] There exists a faithful essential representation $\pi\colon\cA\to\cL(\cH)$ such that $\pi(\cA)$ has a
  F\o lner sequence.
 \item[(iii)] Every faithful essential representation $\pi\colon\cA\to\cL(\cH)$ satisfies that $\pi(\cA)$ has a
 proper F\o lner sequence.
 \item[(iv)] There exists a non-zero representation $\pi\colon\cA\to\cL(\cH)$ such that $\pi(\cA)$ has an amenable trace.
 \item[(v)] Every faithful representation $\pi\colon\cA\to\cL(\cH)$ satisfies that $\pi(\cA)$ has an amenable trace.
 \item[(vi)] $\cA$ is a F\o lner C*-algebra.
\end{itemize}
 \end{theorem}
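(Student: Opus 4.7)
The plan is to prove the cyclic chain
\[
(i) \Rightarrow (iv) \Rightarrow (vi) \Rightarrow (v) \Rightarrow (iii) \Rightarrow (ii) \Rightarrow (i).
\]
Three of these implications are essentially formal. The implication $(ii) \Rightarrow (i)$ is trivial, and $(i) \Rightarrow (iv)$ is a direct application of Proposition~\ref{exercise}~(i) to the concrete C*-algebra $\pi(\cA)$. For $(iii) \Rightarrow (ii)$, a proper F\o lner sequence is in particular a F\o lner sequence, so it suffices to exhibit one faithful essential representation of $\cA$: one can take $\pi = \bigoplus_{n \geq 1} \pi_0$ for any faithful representation $\pi_0$, noting that each $\pi(A)$ acts identically on infinitely many orthogonal summands and hence cannot be a non-zero compact operator.

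For $(v) \Rightarrow (iii)$, let $\pi \colon \cA \to \cL(\cH)$ be any faithful essential representation. Then $\pi(\cA) \cap \cK(\cH) = \{0\}$, and by hypothesis $\pi(\cA)$ carries an amenable trace, so Theorem~\ref{teo:apply} applied directly to $\pi(\cA)$ produces a proper F\o lner sequence for it, which is exactly what (iii) requires.

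The core of the proof is therefore the equivalence of (iv), (v) and (vi), which translates between the concrete notion of an amenable trace (existence of a hypertrace extension on $\cL(\cH)$) and the abstract u.c.p.-approximation property of Definition~\ref{def:FA}. The central tool, already invoked in the proof of Proposition~\ref{exercise}~(ii), is the characterization of amenable traces due to Kirchberg (Theorem~6.2.7 of \cite{bBrown08}): a tracial state $\tau$ on a unital separable C*-algebra $\cB$ is amenable for some (equivalently, every) faithful representation if and only if there exists a sequence of u.c.p. maps $\psi_n \colon \cB \to M_{k(n)}(\C)$ with $\mathrm{tr}(\psi_n(\cdot)) \to \tau(\cdot)$ pointwise and $\|\psi_n(AB) - \psi_n(A)\psi_n(B)\|_{2, \mathrm{tr}} \to 0$. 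For $(iv) \Rightarrow (vi)$, we apply this characterization to the amenable trace on $\pi(\cA)$ to produce u.c.p. maps $\psi_n \colon \pi(\cA) \to M_{k(n)}(\C)$; the compositions $\varphi_n := \psi_n \circ \pi \colon \cA \to M_{k(n)}(\C)$ with the $*$-homomorphism $\pi$ then form u.c.p. maps satisfying (\ref{eq:mult-2}), so that $\cA$ is a F\o lner C*-algebra. For $(vi) \Rightarrow (v)$, starting from u.c.p. maps $\varphi_n$ satisfying (\ref{eq:mult-2}) and using the Cauchy--Schwarz bound $|\mathrm{tr}(F)| \leq \|F\|_{2,\mathrm{tr}}$, one checks that any weak-* cluster point of the states $\mathrm{tr} \circ \varphi_n$ is a tracial state on $\cA$; the reverse direction of the characterization then yields that this trace is amenable with respect to any faithful representation $\pi$, producing the amenable trace on $\pi(\cA)$ required by (v). The main obstacle is the clean invocation of the Kirchberg/Brown--Ozawa characterization in both directions, and in particular the handling of the possibly non-injective representation appearing in (iv) by passing through the quotient $*$-homomorphism; once that characterization is available as a black box, the remaining bookkeeping is routine.
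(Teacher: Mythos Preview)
Your proof is correct and uses the same essential ingredients as the paper (Proposition~\ref{exercise}, Theorem~\ref{teo:apply}, and Theorem~6.2.7 of \cite{bBrown08}), but you run the cycle in a slightly different order. The paper proves $(i)\Leftrightarrow(ii)$ separately (via the same infinite-direct-sum trick you use for $(iii)\Rightarrow(ii)$) and then closes the loop $(ii)\Rightarrow(iv)\Rightarrow(v)\Rightarrow(vi)\Rightarrow(iii)\Rightarrow(ii)$; in particular it establishes $(iv)\Rightarrow(v)$ directly via Arveson's extension theorem (extending a u.c.p.\ map from one representation to the other and pulling the hypertrace back along it), and proves $(vi)\Rightarrow(iii)$ by re-running the construction inside the proof of Theorem~\ref{teo:apply} without any reference to a trace. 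You instead swap the positions of $(v)$ and $(vi)$: your $(iv)\Rightarrow(vi)$ simply composes the u.c.p.\ maps for $\pi(\cA)$ with the $*$-homomorphism $\pi$, and your $(vi)\Rightarrow(v)$ extracts an amenable trace as a weak-$*$ cluster point of $\mathrm{tr}\circ\varphi_n$, invoking both directions of the Kirchberg/Brown--Ozawa characterization together with its representation-independence. The payoff of your ordering is that $(v)\Rightarrow(iii)$ becomes a clean black-box application of Theorem~\ref{teo:apply} as stated, rather than a reference to ``the same arguments as in its proof''; the cost is that the Arveson-extension argument establishing representation-independence of amenability is hidden inside the cited characterization rather than spelled out.
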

\begin{proof}
The implications (iii) $\Rightarrow$ (ii) $\Rightarrow$ (i) and (v)
$\Rightarrow$ (iv) are obvious. To show that (i) implies (ii)
suppose that $\pi\colon\cA\to\cL(\cH)$ is faithful and denote by
$\{P_n\}_n$ a F\o lner sequence for $\pi(\cA)$. Define the
representation
\[
 \widehat{\pi}\colon\cA\to\cL\left(\mathop{\oplus}\limits^\infty\cH\right)\;,\quad
 \widehat{\pi}(A):=\mathop{\oplus}\limits^\infty \pi(A)\;,\;A\in\cA\,,
\]
which, by construction, is essential. Moreover, choose
the sequence of finite-rank projections $\widehat{P}_n:=P_n\oplus0\oplus 0\dots$. Since
\[
 \lim_{n} \frac{\|\widehat{\pi}(A) \widehat{P}_n-\widehat{P}_n \widehat{\pi}(A)\|_2}{\|\widehat{P}_n\|_2} =
 \lim_{n} \frac{\|\pi(A) P_n-P_n \pi(A)\|_2}{\|P_n\|_2} = 0\;\;,\quad A\in\cA\;,
\]
we conclude that $\{\widehat{P}_n\}_n$ is a F\o lner sequence for $\widehat{\pi}(\cA)$.

The implication (ii) $\Rightarrow$ (iv) follows from
Proposition~\ref{exercise}~(i).

We now show that (iv) $\Rightarrow$ (v), following the proof of
\cite[Proposition 6.2.2]{bBrown08}: let
$\pi_0\colon\cA\to\cL(\cH_0)$ be a faithful representation and
identify $\cA$ with $\pi_0(\cA)$. Let $\pi\colon\cA\to\cL(\cH)$ be a
non-zero representation such that $\pi(\cA)$ has an amenable trace
$\tau$ which extends to a hypertrace $\psi$ on $\cL(\cH)$. From
Arveson's extension theorem (see, e.g., Theorem~1.6.1 in
\cite{bBrown08}), there exists a u.c.p.~map $\Phi\colon
\cL(\cH_0)\to\cL(\cH)$ extending $\pi$. Defining
$\psi_0:=\psi\circ\Phi$ it remains to show that $\psi_0$ is a
hypertrace on $\cL(\cH_0)$ (or
$\tau_0:=\psi_0 |_\cA$ is an amenable trace on
$\cA$). By construction it is immediate that $\psi_0$ is a state on
$\cL(\cH_0)$ extending the trace $\tau_0$. It remains to show that
$\psi_0$ is centralized by $\cA$: for any $X\in\cL(\cH_0)$ and
$A\in\cA$ we have
\begin{eqnarray*}
    \psi_0(AX)&=& \psi\left(\Phi(AX)\right) \;\;=\;\;\psi\big(\Phi(A)\Phi(X)\big)
                                            \;\;=\;\;\psi\big(\Phi(X)\Phi(A)\big) \\[2mm]
              &=&\psi_0(XA)\;,
\end{eqnarray*}
where for the second and fourth equalities we have used that $\cA$
is a multiplicative domain for $\Phi$.

(v) $\Rightarrow$ (vi) follows from \cite[Theorem 6.2.7]{bBrown08}.

(vi) $\Rightarrow $ (iii): Let $\iota\colon\cA\to\cL(\cH)$ be a
faithful essential representation, and identify $\cA$ with its image
$\iota (\cA)$ under $\iota $. Let $\varphi _n \colon \cA \to
M_{k(n)} (\C )$ be a sequence of u.c.p. maps such that $\lim _n k(n)
=\infty$ and such that (\ref{eq:mult-2}) holds. (Use the trick at
the beginning of the proof of Proposition \ref{prop:F=PF} to show
that we can always get such a sequence $k(n)$.)

By using the same arguments as in the proof of
Theorem~\ref{teo:apply} (disregarding the part concerning the approximation
of the amenable trace $\tau$),
we get that there is a proper F\o lner sequence $\{ P_n \}$
for $\cA$, as desired.
\end{proof}

\begin{remark}
 \begin{enumerate}

 \item[(i)] The class of C*-algebras introduced
in this section has been considered before by B\'edos. In
\cite{Bedos95} the author defines a C*-algebra $\cA$ to be {\it
weakly hypertracial} if $\cA$ has a non-degenerate representation
$\pi$ such that $\pi(\cA)$ has a hypertrace. In this sense, the
preceding theorem gives a new characterization of weakly
hypertracial C*-algebras in terms of u.c.p. maps.
\item[(ii)] The equivalences between (i), (iv) and (v)
in Theorem~\ref{thm:characFolner} are essentially known
(see \cite{Bedos95}).
\item[(iii)] The relation between amenable traces and the asymptotic 
multiplicativity condition needed in Definition~\ref{def:FA}~(i)
has been considered before (see, e.g., Theorem~2.3 in \cite{Brown04}).
\item[(iv)] The Toeplitz algebra (i.e., the C*-algebra generated by
the unilateral shift) is an example that shows that not every F\o lner 
algebra is a quasidiagonal C*-algebra.

\end{enumerate}
\end{remark}

In the final part of this section we recall some operator 
algebraic properties of the class of F\o lner C*-algebras.
Most of them have already been proved in Section~2 of \cite{Bedos95} in a more general context.
For convenience of the reader and to make this exposition partly
self-contained we give short proofs in some cases.

\begin{corollary}
\label{cor:nonzeroquotients} Let $\cA$ be a unital separable
C*-algebra. If a nonzero quotient of $\cA$ is a F\o lner C*-algebra,
then $\cA$ is a F\o lner C*-algebra. In particular, any C*-algebra
admitting a finite-dimensional representation is a F\o lner
C*-algebra.
\end{corollary}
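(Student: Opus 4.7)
The plan is to exploit the characterization (iv) $\Leftrightarrow$ (vi) of Theorem~\ref{thm:characFolner}: a unital separable C*-algebra is F\o lner if and only if it admits some non-zero representation whose image has an amenable trace. This formulation is specifically designed so that the F\o lner property pulls back along surjections, which is exactly what we need.

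First I would handle the main assertion. Let $I\subsetneq \cA$ be a closed two-sided ideal such that $\cB := \cA/I \neq 0$ is a F\o lner C*-algebra, and denote by $q\colon \cA \to \cB$ the canonical quotient map. Applying (vi) $\Rightarrow$ (iv) of Theorem~\ref{thm:characFolner} to $\cB$, I would extract a non-zero representation $\sigma\colon \cB \to \cL(\cH)$ such that $\sigma(\cB)$ carries an amenable trace $\tau$. The composition $\pi := \sigma \circ q \colon \cA \to \cL(\cH)$ is then a representation of $\cA$ with $\pi(\cA) = \sigma(\cB)$, and it is non-zero because $q$ is surjective and $\sigma \neq 0$. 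In particular $\pi(\cA)$ has the amenable trace $\tau$, and (iv) $\Rightarrow$ (vi) of Theorem~\ref{thm:characFolner} applied now to $\cA$ gives that $\cA$ is a F\o lner C*-algebra.

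For the "in particular" clause, assume $\cA$ admits a non-zero finite-dimensional representation $\rho\colon \cA \to M_k(\C)$. Its image $\rho(\cA) \subseteq M_k(\C)$ is a finite-dimensional unital C*-algebra which, by the first isomorphism theorem, is isomorphic to the non-zero quotient $\cA/\ker\rho$. Any finite-dimensional unital C*-algebra $\cB$ is trivially a F\o lner C*-algebra: embedding $\cB \hookrightarrow M_N(\C)$ for some $N$ and taking $\varphi_n$ to be this embedding for every $n$ gives a constant sequence of u.c.p.\ maps satisfying (\ref{eq:mult-2}) (indeed the commutator is identically zero). The first part of the corollary then applies to the quotient $\cA/\ker\rho$ and concludes that $\cA$ is a F\o lner C*-algebra.

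There is no real obstacle in this proof: once Theorem~\ref{thm:characFolner} is in hand, the argument is a one-line application of representation pullback. The only step requiring any verification is the non-triviality of $\pi = \sigma \circ q$, and this is immediate from the surjectivity of $q$ together with $\sigma \neq 0$. The whole conceptual weight sits in the "\emph{some} non-zero representation" formulation (iv), which is precisely what makes the F\o lner property stable under passage to preimages of quotients.
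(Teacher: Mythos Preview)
Your argument is correct and follows essentially the same route as the paper: pull back a non-zero representation with an amenable trace along the quotient map and invoke condition~(iv) of Theorem~\ref{thm:characFolner}. You additionally spell out why finite-dimensional algebras are F\o lner, which the paper leaves implicit.
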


\begin{proof}
This follows from condition (iv) in Theorem~\ref{thm:characFolner}.
Indeed, let $\rho \colon \cA \to \cB$ be a surjective
*-homomorphism onto a F\o lner C*-algebra $\cB$, and let $\pi \colon
\cB\to \cL (\cH )$ be a nonzero representation such that $\pi (\cB
)$ has an amenable trace (Theorem~\ref{thm:characFolner}(iv)). Then
$\pi \circ \rho $ is a nonzero representation of $\cA$ such that
$\pi\circ \rho (\cA )$ has an amenable trace. By
Theorem~\ref{thm:characFolner}~(iv) we conclude that $\cA$ is a F\o lner C*-algebra.
\end{proof}

Let $\cA\subset \cL (\cH)$ be a F\o lner C*-algebra. Note that it
can happen that $\cA$ has no {\em proper} F\o lner sequence (recall
Definition~\ref{def:Foelner}~(i)) as the following simple example
shows: let $\cB\subset \cL(\cH_0)$ be a unital separable C*-algebra
which is not a F\o lner C*-algebra acting on an infinite dimensional
Hilbert space and define $\cA:=\C \oplus\cB$ on
$\cH:=\C\oplus\cH_0$. By the previous corollary $\cA$ is a F\o lner
C*-algebra, and it is readily checked that $\cA$ has no proper F\o
lner sequence in $\cL (\cH)$ (although, by Theorem
\ref{thm:characFolner}(iii), it will have a proper F\o lner sequence
in a different representation).

\medskip

For the next result we recall some standard notation. We will denote
by $\cA \odot \cB$ the algebraic tensor product of two C*-algebras
$\cA$ and $\cB$, and by $\cA \otimes \cB$ its minimal tensor
product. The fact that any C*-tensor product of two F\o lner
C*-algebras is a F\o lner C*-algebra was proved by B\'edos in
\cite[Proposition 2.13]{Bedos95}. However the nice interplay between
amenable traces and F\o lner sequences shown in our proof is a
genuine application of our approach.

\begin{proposition}
\label{prop:tensorproduct} Let $\cA$ and $\cB$ be two F\o lner
C*-algebras, and let $\pi _A\colon \cA \to \cL (\cH_A)$ and $\pi_B
\colon \cB \to \cL (\cH_B) $ be faithful essential representations
of $\cA$ and $\cB$. Then $\cA \otimes \cB$ is a F\o lner C*-algebra.
Moreover if $\tau _A$ and $\tau _B$ are amenable traces on $\cA$ and
$\cB$, then there exists a hypertrace on $\cL (\cH _A\otimes \cH
_B)$ extending the state $\tau _A\otimes \tau_B$ on $\cA\otimes
\cB$.
\end{proposition}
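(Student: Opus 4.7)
My plan is to prove the ``moreover'' clause directly; the first assertion that $\cA\otimes\cB$ is a F\o lner C*-algebra then follows from the equivalence (iv)$\Leftrightarrow$(vi) of Theorem~\ref{thm:characFolner}, once we fix any amenable traces $\tau_A$ on $\cA$ and $\tau_B$ on $\cB$ (these exist by Theorem~\ref{thm:characFolner}~(v) since $\cA$ and $\cB$ are F\o lner C*-algebras).

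The first step is to apply Proposition~\ref{exercise}~(ii) to each factor. Essentiality of $\pi_A$ and $\pi_B$ gives $\pi_A(\cA)\cap\cK(\cH_A)=\{0\}$ and $\pi_B(\cB)\cap\cK(\cH_B)=\{0\}$, so Proposition~\ref{exercise}~(ii) produces F\o lner sequences $\{P_n\}_n$ for $\pi_A(\cA)$ and $\{Q_n\}_n$ for $\pi_B(\cB)$ satisfying the trace-approximation property~(\ref{eq:conv-trace}) with respect to $\tau_A$ and $\tau_B$.

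The second step is to verify that $\{P_n\otimes Q_n\}_n$ is a F\o lner sequence for $(\pi_A\otimes\pi_B)(\cA\otimes\cB)$ on $\cH_A\otimes\cH_B$. On elementary tensors,
\[
  [P_n\otimes Q_n,\,A\otimes B]=[P_n,A]\otimes Q_nB + AP_n\otimes[Q_n,B],
\]
and multiplicativity $\|X\otimes Y\|_2=\|X\|_2\,\|Y\|_2$ of the Hilbert--Schmidt norm yields
\[
  \frac{\|[P_n\otimes Q_n,\,A\otimes B]\|_2}{\|P_n\otimes Q_n\|_2}
  \le \|B\|\,\frac{\|[P_n,A]\|_2}{\|P_n\|_2}+\|A\|\,\frac{\|[Q_n,B]\|_2}{\|Q_n\|_2}\longrightarrow 0.
\]
By Proposition~\ref{total}~(i) this extends to the whole C*-algebra generated by elementary tensors and $\1$, namely $(\pi_A\otimes\pi_B)(\cA\otimes\cB)$.

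Finally, I construct the hypertrace as a weak-$*$ cluster point $\psi$ of the states $\psi_n(X):=\mathrm{Tr}(X(P_n\otimes Q_n))/\mathrm{Tr}(P_n\otimes Q_n)$ on $\cL(\cH_A\otimes\cH_B)$. The $\frac{\varepsilon}{2}$-argument of Proposition~\ref{exercise}~(i), combined with the trace-class version of the F\o lner property given by Proposition~\ref{total}~(ii), shows that $\psi$ is centralized by $(\pi_A\otimes\pi_B)(\cA\otimes\cB)$, so $\psi$ is the desired hypertrace. Multiplicativity of $\mathrm{Tr}$ on elementary tensors gives
\[
  \psi_n(A\otimes B)=\frac{\mathrm{Tr}(AP_n)}{\mathrm{Tr}(P_n)}\cdot\frac{\mathrm{Tr}(BQ_n)}{\mathrm{Tr}(Q_n)}\longrightarrow \tau_A(A)\tau_B(B),
\]
so $\psi$ restricts to $\tau_A\otimes\tau_B$ on the algebraic tensor product, and by continuity on the minimal tensor product $\cA\otimes\cB$. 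The main non-routine point in this argument is the compatibility between the F\o lner sequences in the two factors and the prescribed amenable traces, which is exactly what Proposition~\ref{exercise}~(ii) furnishes; everything else is a direct computation exploiting multiplicativity of $\mathrm{Tr}$ and $\|\cdot\|_2$ across tensor products.
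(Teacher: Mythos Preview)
Your proof is correct and follows essentially the same route as the paper: invoke Proposition~\ref{exercise}(ii) on each factor to obtain trace-approximating F\o lner sequences, show $\{P_n\otimes Q_n\}$ is F\o lner for the tensor product on elementary tensors and then appeal to Proposition~\ref{total}, and finally take a weak-* cluster point of the associated states to get the hypertrace extending $\tau_A\otimes\tau_B$. The only cosmetic difference is that the paper verifies the F\o lner condition via the off-diagonal form $\|(\1\otimes\1-P_n\otimes Q_n)(A\otimes B)(P_n\otimes Q_n)\|_2$ and the splitting $\1\otimes\1-P_n\otimes Q_n=(\1-P_n)\otimes\1+P_n\otimes(\1-Q_n)$, whereas you use the commutator form with the Leibniz-type identity; these are equivalent by Proposition~\ref{total}(ii).
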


\begin{proof}
By Proposition \ref{exercise}(ii), there are F\o lner sequences
$\{P_n\}_n$ and $\{ Q_n\}_n$ for $\cA$ and $\cB$, acting on $\cH_A$
and $\cH_B$ respectively, such that
$$\tau_{\cA}(A)=\lim_{n\to\infty} \frac{\mathrm{Tr}(AP_n)}{\mathrm{Tr}(P_n)},\quad A\in\cA;
\qquad \tau_{\cB}(B)=\lim_{n\to\infty}
\frac{\mathrm{Tr}(BQ_n)}{\mathrm{Tr}(Q_n)},\quad B\in\cB . $$ We
show that $\{ P_n\otimes Q_n\}_n$ is a F\o lner sequence for
$\cA\otimes \cB\subset \cL (\cH_A\otimes \cH_B)$. Let $A\in \cA$ and
$B\in \cB$. Then we have
\begin{align*}
&\frac{\| (\1 \otimes \1-P_n\otimes Q_n) (A\otimes B) (P_n\otimes
Q_n)\|_2^2}{\| P_n\otimes Q_n \|_2^2 }\\[2mm]
& \le \frac{\| ((\1-P_n)\otimes \1 )(A\otimes B)(P_n\otimes
Q_n)\|_2^2}{\| P_n\otimes Q_n \|_2^2 } + \frac{\|(P_n\otimes
(\1-Q_n))(A\otimes B)(P_n\otimes Q_n) \|_2^2}{\| P_n\otimes Q_n
\|_2^2 } \\[2mm]
& = \frac{\| (\1-P_n)AP_n \|_2^2}{\| P_n\|_2^2}\cdot
\frac{\|BQ_n\|_2^2}{\| Q_n\|_2^2} + \frac{\|P_nAP_n\|_2^2}{\|
P_n\|_2^2}\cdot \frac{\|(\1-Q_n)BQ_n\|_2 ^2}{\| Q_n\|_2^2}\\[2mm]
& \le \|B\|^2\cdot  \frac{\| (\1-P_n)AP_n \|_2^2}{\| P_n\|_2^2}+
\|A\|^2 \cdot \frac{\|(\1-Q_n)BQ_n\|_2 ^2}{\| Q_n\|_2^2}
\mathop{\longrightarrow}\limits_{n\to\infty} 0.
\end{align*}
Since the set $\{A\otimes B \mid A\in \cA, B\in \cB \}$ is a
selfadjoint, generating set for $\cA \otimes \cB$, it follows from
Proposition \ref{total} that $\{P_n\otimes Q_n\}$ is a F\o lner
sequence for $\cA\otimes \cB$. This shows that $\cA\otimes \cB$ is a
F\o lner C*-algebra.

Let $\psi $ be a weak-* cluster point of the sequence of states
$\{\psi _n \}_n$ defined by
$$\psi _n (X)=\frac{\mathrm{Tr}(X(P_n\otimes Q_n))}{\mathrm{Tr}(P_n\otimes Q_n)}\;,\quad X\in\cL(\cH_A\otimes \cH_B)\;.
$$
Then $\psi (A\otimes B)= \tau_{\cA}(A)\tau_{\cB}(B)$ for all $A\in
\cA$ and $B\in \cB$. Hence, $\psi$ extends the state
$\tau_{\cA}\otimes \tau_{\cB}$ on $\cA \otimes \cB$. Moreover,
$\psi$ is a hypertrace for $\cA\otimes \cB$ on $\cL (\cH_{\cA}
\otimes \cH_{\cB})$ (see the proof of Proposition
\ref{exercise}(i)). This concludes the proof.
\end{proof}

\begin{corollary}\cite[Proposition 2.13]{Bedos95}
\label{cor:alphatensorproduct}  Let $\cA$ and $\cB$ be two F\o lner
C*-algebras, and let $\alpha$ be any C*-norm on the algebraic tensor
product $\cA \odot \cB$. Then $\cA \otimes _{\alpha} \cB$ is a F\o
lner C*-algebra.
\end{corollary}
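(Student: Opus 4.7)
The proof will be a very short application of two results already in hand: Proposition~\ref{prop:tensorproduct}, which handles the minimal tensor product $\cA \otimes \cB$, together with Corollary~\ref{cor:nonzeroquotients}, which says that a unital separable C*-algebra with a nonzero F\o lner quotient is itself a F\o lner C*-algebra. The plan is therefore to exhibit $\cA \otimes \cB$ (minimal) as a quotient of $\cA \otimes_\alpha \cB$ and then apply the two results in sequence.

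The key point is that $\|\cdot\|_{\min}$ is the smallest C*-norm on the algebraic tensor product $\cA \odot \cB$, so for any C*-norm $\alpha$ one has $\|\cdot\|_{\min} \le \alpha$ on $\cA \odot \cB$. Consequently, the identity map on $\cA \odot \cB$ extends by continuity to a $*$-homomorphism $\rho \colon \cA \otimes_\alpha \cB \to \cA \otimes \cB$. Since $\rho$ has dense image and $*$-homomorphisms between C*-algebras have closed range, $\rho$ is surjective, so $\cA \otimes \cB$ is a (nonzero) quotient of $\cA \otimes_\alpha \cB$.

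Next, I would note that $\cA \otimes_\alpha \cB$ is separable and unital (separability follows from choosing countable dense subsets in $\cA$ and $\cB$, whose $\C$-linear combinations give a countable dense subset of $\cA \odot \cB$, which in turn is $\alpha$-dense in $\cA \otimes_\alpha \cB$). By Proposition~\ref{prop:tensorproduct}, $\cA \otimes \cB$ is a F\o lner C*-algebra, and then Corollary~\ref{cor:nonzeroquotients} applied to the surjection $\rho$ immediately yields that $\cA \otimes_\alpha \cB$ is a F\o lner C*-algebra.

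There is essentially no obstacle: the substance of the statement is contained in Proposition~\ref{prop:tensorproduct} (for the minimal norm), while the passage to an arbitrary C*-norm is a formal consequence of the quotient permanence property, since every $\cA \otimes_\alpha \cB$ sits above $\cA \otimes \cB$ via a canonical surjection.
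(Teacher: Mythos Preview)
Your proof is correct and follows exactly the same approach as the paper: exhibit the canonical surjection $\cA\otimes_\alpha\cB\to\cA\otimes\cB$ (coming from minimality of $\|\cdot\|_{\min}$), then apply Proposition~\ref{prop:tensorproduct} and Corollary~\ref{cor:nonzeroquotients}. The only difference is that you spell out the details (why the surjection exists, separability and unitality of $\cA\otimes_\alpha\cB$) that the paper leaves implicit.
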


\begin{proof}
Let $\alpha$ be any C*-norm on the algebraic tensor product $\cA
\odot \cB$. Then there is a surjective $*$-homomorphism $\cA\otimes
_{\alpha} \cB\to \cA \otimes \cB$. So the result follows from
Proposition \ref{prop:tensorproduct} and Corollary
\ref{cor:nonzeroquotients}.
\end{proof}

The relation with nuclearity is as follows. Recall that there are
non-nuclear F\o lner C*-algebras, such as $C^* (\mathbb F_2)$, the
full C*-algebra of the free group on two generators, which is even
quasidiagonal.

\begin{corollary}
\label{cor:nuclversusFol} Let $\cA$ be a unital  nuclear C*-algebra.
Then $\cA$ is a F\o lner C*-algebra if and only if $\cA$ admits a
tracial state. In particular, every stably finite unital nuclear
C*-algebra is F\o lner.
\end{corollary}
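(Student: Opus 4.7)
The plan is to derive both directions of the equivalence directly from Theorem~\ref{thm:characFolner} combined with two well-known facts about nuclear C*-algebras. The forward direction is essentially immediate. If $\cA$ is a F\o lner C*-algebra, then by Theorem~\ref{thm:characFolner}~(vi)$\Rightarrow$(iv) there is a non-zero representation $\pi$ such that $\pi(\cA)$ carries an amenable trace $\tau$; pulling $\tau$ back along $\pi$ gives a tracial state on $\cA$ (every amenable trace is, in particular, a trace). No nuclearity is needed here.

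For the converse, suppose $\cA$ is nuclear and admits a tracial state $\tau$. The key input is that for nuclear C*-algebras every tracial state is automatically amenable. I would invoke this as a known fact (cf.~Chapter~6 of \cite{bBrown08}), but the conceptual content is the following: take the GNS representation $\pi_\tau\colon\cA\to\cL(\cH_\tau)$ and let $\cM:=\pi_\tau(\cA)''$. Since $\cA$ is nuclear, $\cM$ is an injective (hence hyperfinite) finite von Neumann algebra, so the normal extension $\widetilde\tau$ of $\tau$ to $\cM$ is an amenable trace on $\cM$; composing with a conditional expectation / hypertrace obtained from injectivity of $\cM\subset\cL(\cH_\tau)$ yields a hypertrace on $\cL(\cH_\tau)$ restricting to $\tau$ on $\pi_\tau(\cA)$. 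Thus $\pi_\tau(\cA)$ has an amenable trace, and Theorem~\ref{thm:characFolner}~(iv)$\Rightarrow$(vi) implies $\cA$ is F\o lner.

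For the final "in particular" clause, I would combine the equivalence just proved with Haagerup's theorem that every unital stably finite exact C*-algebra (and nuclear algebras are exact) admits a tracial state. So if $\cA$ is unital, nuclear and stably finite, it has a trace, and the previous paragraph shows it is F\o lner.

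The main obstacle is really just the single nontrivial input, namely that traces on nuclear C*-algebras are amenable; once that is granted everything else is a bookkeeping exercise on top of Theorem~\ref{thm:characFolner}. Since the paper already relies freely on \cite{bBrown08}, I would simply cite the relevant statement there (amenability of traces on nuclear C*-algebras, and Haagerup's existence theorem) rather than redevelop the GNS/injectivity argument sketched above.
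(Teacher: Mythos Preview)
Your proposal is correct and follows essentially the same approach as the paper. The paper's proof is a two-line invocation of Theorem~\ref{thm:characFolner} together with \cite[Proposition~6.3.4]{bBrown08} (traces on nuclear C*-algebras are amenable) for the equivalence, and \cite[Corollary~V.2.1.16]{BlacEnc} (stably finite unital nuclear C*-algebras admit a tracial state) for the final clause; your sketch of the GNS/injectivity argument and your appeal to Haagerup's theorem are exactly the content of those citations.
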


\begin{proof}
The first part follows from Theorem \ref{thm:characFolner} and
\cite[Proposition 6.3.4]{bBrown08}. If $\cA$ is a stably finite
unital nuclear C*-algebra then $\cA$ admits a tracial state by
\cite[Corollary V.2.1.16]{BlacEnc}.
\end{proof}

Note that the Cuntz algebras $\mathcal O _n$ are nuclear but not
F\o lner.

Finally, we characterize F\o lner reduced crossed products. The
proof of the following result follows from Proposition~2.12 in
\cite{Bedos95}. Let us remark that it is possible to give a
variation of B\'edos' proof using Day's fixed point theorem
(cf., \cite{Day61}).

\begin{proposition}
\label{thm:crossedprods} Let $\Gamma $ be a countable discrete group
and let $\alpha $ be an action of $\Gamma $ on a separable
C*-algebra $\cA$. Then the following conditions are equivalent:
\begin{enumerate}
\item[(i)] $\cA \rtimes _{\alpha,r}\Gamma $ is a F\o lner C*-algebra.
\item[(ii)] $\Gamma $ is amenable and $\cA$ has a $\Gamma
$-invariant amenable trace.
\item[(iii)] $\cA$ is a F\o lner C*-algebra and $\Gamma $ is an
amenable group.
\end{enumerate}
\end{proposition}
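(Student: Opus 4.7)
The plan is to run the cycle (i) $\Rightarrow$ (iii) $\Rightarrow$ (ii) $\Rightarrow$ (i), using throughout the characterizations of F\o lner C*-algebras provided by Theorem~\ref{thm:characFolner}.

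For (i) $\Rightarrow$ (iii), I would use that both $\cA$ and $C^*_r(\Gamma)$ embed as unital C*-subalgebras of $\cA\rtimes_{\alpha,r}\Gamma$, and as noted after Definition~\ref{def:FA} any unital C*-subalgebra of a F\o lner C*-algebra is again F\o lner; hence $\cA$ is F\o lner. To deduce amenability of $\Gamma$, apply Theorem~\ref{thm:characFolner}(iv) to $C^*_r(\Gamma)$ to obtain an amenable trace and an extending hypertrace $\psi$ on $\cL(\ell^2(\Gamma))$ in the regular representation. Setting $m(f):=\psi(M_f)$ for $f\in\ell^\infty(\Gamma)$, where $M_f$ is multiplication by $f$, the centralizing identity $\psi(u_g X u_g^*)=\psi(X)$ combined with $u_g M_f u_g^*=M_{g\cdot f}$ yields $m(g\cdot f)=m(f)$, so $m$ is an invariant mean and $\Gamma$ is amenable.

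For (iii) $\Rightarrow$ (ii) I would invoke Day's fixed point theorem, as suggested in the text. Since $\cA$ is F\o lner, Theorem~\ref{thm:characFolner}(iv) gives that the set of amenable tracial states on $\cA$ is nonempty; it is also convex and weak-$*$ closed in the compact state space $S(\cA)$. The action $\tau\mapsto\tau\circ\alpha_{g^{-1}}$ is affine and weak-$*$ continuous, and it preserves amenability of traces (composing with a $*$-automorphism preserves the relevant u.c.p.\ approximation characterization). Amenability of $\Gamma$ then produces a $\Gamma$-fixed point, giving a $\Gamma$-invariant amenable trace on $\cA$.

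For (ii) $\Rightarrow$ (i), the main construction, let $\tau_\cA$ be the given $\Gamma$-invariant amenable trace on $\cA$, with hypertrace extension $\psi$ on $\cL(\cH_\cA)$ in some faithful representation of $\cA$. Since $\Gamma$ is amenable, $\cA\rtimes_\alpha\Gamma=\cA\rtimes_{\alpha,r}\Gamma$, and the formula $\tau(\sum_g A_g u_g):=\tau_\cA(A_e)$ defines a tracial state on the crossed product, the trace identity relying precisely on $\Gamma$-invariance of $\tau_\cA$. To witness amenability of $\tau$, realize the crossed product on $\cH_\cA\otimes\ell^2(\Gamma)$ via the standard regular covariant representation, pick a $\Gamma$-invariant mean $m$ on $\ell^\infty(\Gamma)$, and define $\Psi$ on $\cL(\cH_\cA\otimes\ell^2(\Gamma))$ by $\Psi(X):=m_\gamma\bigl(\psi(X_{\gamma,\gamma})\bigr)$, where $X_{\gamma,\gamma}\in\cL(\cH_\cA)$ is the diagonal compression along the decomposition $\ell^2(\Gamma)=\bigoplus_\gamma\C\delta_\gamma$. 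The main obstacle will be the centralization check on $\Psi$: centralization by the $\cA$-image is essentially fiberwise and follows from $\psi$ being a hypertrace for $\cA$, while centralization by the unitaries $u_g=\1\otimes\lambda_g$ requires combining the invariance of $m$ under the shift $\gamma\mapsto g^{-1}\gamma$ with the $\Gamma$-invariance of $\tau_\cA$, so that the shift introduced by $u_g X u_g^*$ in the diagonal entries $X_{\gamma,\gamma}$ is absorbed by $m$.
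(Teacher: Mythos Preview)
Your outline is correct and matches the approach the paper indicates: the paper does not give its own argument but refers to B\'edos \cite[Proposition~2.12]{Bedos95} and explicitly remarks that Day's fixed point theorem yields a variation, which is precisely your (iii) $\Rightarrow$ (ii). One minor correction in your (ii) $\Rightarrow$ (i): centralization of $\Psi$ by $u_g$ follows from the invariance of $m$ alone (since $(u_gXu_g^*)_{\gamma,\gamma}=X_{g^{-1}\gamma,\,g^{-1}\gamma}$), whereas the $\Gamma$-invariance of $\tau_\cA$ is what guarantees $\Psi|_{\pi(\cA)}=\tau_\cA$, because in the regular covariant representation $\pi(a)_{\gamma,\gamma}=\alpha_{\gamma^{-1}}(a)$.
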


\paragraph{\bf Acknowledgements:}
This article was partly written while the authors were visiting the
{\em Centre de Recerca Matem\`atica} (Barcelona). It is a pleasure to
thank useful conversations with Nate Brown during this visit.
We also want to thank Erik B\'edos for his friendly interest in this article and
for many suggestions to improve it.

\providecommand{\bysame}{\leavevmode\hbox to3em{\hrulefill}\thinspace}


\begin{thebibliography}{10}

\bibitem{pALlY03}
P.~Ara, F.~Lled\'o and D.~Yakubovich, \textit{F\o lner sequences in Operator Theory and Operator Algebras}, 
to appear in Oper. Theory Adv. Appl., preprint 2013, arXiv: math.OA/1303.3392.

\bibitem{Arveson94}
W.~Arveson,
{\em C*-algebras and numerical linear algebra},
J. Funct. Anal. \textbf{122} (1994), 333-360.

\bibitem{ArvesonIn94}
W.~Arveson,
{\em The role of C*-algebras in infinite dimensional numerical linear algebra},
In {\em C*-Algebras: 1943-1993. A Fifty Year Celebration}, R.S.~Doran (ed.),
Contemporary Mathematics Vol.~167, American Mathematical Society, Providence,
Rhode Island, 1994; pp.~115-129.

\bibitem{Bannon07}
J.P.~Bannon and M.~Ravichandran,
\textit{A F{\o}lner invariant for type~II$_1$ factors},
Expo. Math. \textbf{35} (2007), 117 -- 130.

\bibitem{Bedos97}
E.~B\'{e}dos, \textit{On F{\o}lner nets, Szeg\"o's theorem
and other eigenvalue distribution theorems},
Expo. Math. \textbf{15} (1997), 193-228.
Erratum: Expo. Math. \textbf{15} (1997), 384.

\bibitem{Bedos95}
E.~B\'edos, \textit{Notes on hypertraces and C*-algebras},
J. Operator Theory \textbf{34} (1995) 285-306.

\bibitem{Bedos94}
E.~B\'edos, {\em On filtrations for C*-algebras},
Houston J. Math. \textbf{20} (1994) 63-74.

\bibitem{BlacEnc}
B. Blackadar, {\em Operator algebras. Theory of C*-algebras and von
Neumann algebras}, Encyclopaedia of Mathematical Sciences, 122.
Operator Algebras and Non-commutative Geometry, III.
Springer-Verlag, Berlin, 2006.

\bibitem{bBoca01}
F.P.~Boca, {\em Rotation C*-algebras and Almost Mathieu Operators},
The Theta Foundation, Bucarest, 2001.

\bibitem{bBoettcher06}
A.~B\"ottcher and B.~Silbermann,
{\em Analysis of Toeplitz operators. Second Edition},
Springer Verlag, Berlin, 2006.

\bibitem{Brown04}
N.P.~Brown, \textit{Connes' embedding problem and Lance's WEP}, 
Int. Math. Res. Not. {\bf 2004}  no. 10, 501-510. 

\bibitem{Brown06a}
N.P.~Brown, \textit{Invariant means and finite representation theory of C*-algebras},
Mem. Am. Math. Soc. {\bf 184} (2006) no.~865, 1-105.

\bibitem{BrownIn04}
N.P.~Brown, \textit{On quasidiagonal C*-algebras}, In \textit{Adanced Studies
in Pure Mathematics 38: Operator Algebras
and Applications}, H. Kosaki (ed.), Mathematical Society of Japan, 2004,
pp.~19-64

\bibitem{Brown06}
N.P.~Brown, \textit{Quasi-diagonality and the finite section method},
Math. Comput. {\bf 76} (2006), 339-360.

\bibitem{bBrown08}
N.P.~Brown and N.~Ozawa, \textit{C*-Algebras and Finite-Dimensional
Approximations}, American Mathematical Society, Providence,
Rhode Island, 2008.

\bibitem{CC10} T. Ceccherini-Silberstein and M. Coornaert,
\textit{Cellular Automata and Groups}, Springer-Verlag, Berlin, 2010.

\bibitem{CS08}
T. Ceccherini-Silberstein and A. Y. Samet-Vaillant,  \textit{Gromov's
translation algebras, growth and amenability of operator algebras},
Expo. Math. 26 (2008), no. 2, 141-162.

\bibitem{Connes76}
A.~Connes, {\em Classification of injective factors. Cases II$_1$, II$_\infty$,
III$_\lambda$, $\lambda\not=1$}, Ann. Math. {\bf 104} (1976), 73-115.

\bibitem{ConnesIn76} 
A.~Connes, \textit{On the classification of von {N}eumann
algebras and their automorphisms}, In Symposia Mathematica, Vol.~XX
{\it (Convegno sulle Algebre $C\sp*$ e loro Applicazioni in Fisica Teorica,
Convegno sulla Teoria degli Operatori Indice e Teoria $K$, INDAM, Rome, 1975)},
Academic Press, London, 1976.

\bibitem{Day61}
M.M.~Day, {\em Fixed-point theorems for compact convex sets},
Illinois J. Math. {\bf 5} (1961) 585-590.

\bibitem{bGrenander84}
U.~Grenander and G.~Szeg\"o, \emph{Toeplitz Forms and their Applications},
Chelsea Publ. Co., New York, 1984.

\bibitem{bHagen01}
R.~Hagen, S.~Roch and B.~Silbermann,
{\em C*-Algebras and Numerical Analysis}, Marcel Dekker, Inc., New York, 2001.

\bibitem{Halmos68}
P.R.~Halmos, \textit{Quasitriangular operators},
Acta Sci. Math. (Szeged) {\bf 29} (1968), 283-293.

\bibitem{Hansen08}
A.C.~Hansen, \textit{On the approximation of spectra of linear operators on Hilbert spaces},
J. Funct. Anal. {\bf 254} (2008), 2092-2126.

\bibitem{Kirchberg94}
E.~Kirchberg, \emph{Discrete groups with {K}azhdan's property {${\rm T}$} and
  factorization property are residually finite}, Math. Ann. \textbf{299}
  (1994), 551--563.

\bibitem{pLledo11}
F.~Lled\'o,
\textit{On spectral approximation, F\o lner sequences and crossed products},
J. Approx. Theory {\bf 170} (2013) 155-171.

\bibitem{pLledoYakubovich12}
F.~Lled\'o and D.~Yakubovich,
\textit{F\o lner sequences and finite operators}, 
J. Math. Anal. Appl. {\bf 403} (2013) 464-476.

\bibitem{Ozawa04} N. Ozawa, {\em About the QWEP conjecture}, Internat. J. Math. {\bf 15} (2004),
501--530.

\bibitem{bPaterson88} A.L.~Paterson, \textit{Amenability}, American
Mathematical Society, Providence, Rhode Island, 1988.

\bibitem{Popa86}
S.~Popa, {\em A short proof of ``injectivity implies hyperfiniteness''
for finite von Neumann algebras}, J. Operator Theory
{\bf 16} (1986), 261-272.

\bibitem{PopaIn88}
S.~Popa, {\em On amenability in type~II$_1$ factors}, in
{\em Operator Algebras and Applications Vol.~II},
London Math. Soc. Lect. Notes Ser. {\bf 136}, Cambridge University Press,
Cambridge, 1988; pp. 173-183.

\bibitem{Roch08}
S.~Roch, {\em Finite sections of band-dominated operators}, Mem. Am. Math. Soc.
{\bf 191} no.~895 (2008), 87 pp.

\bibitem{Smucker82}
R. Smucker, \textit{Quasidiagonal weighted shifts},
Pacific J. Math. {\bf 98} (1982) 173-182

\bibitem{Szego20}
G.~Szeg\"o, \emph{Beitr\"age zur Theorie der Toeplitzschen Formen. (Erste Mitteilung)},
Math. Zeitschrift {\bf 6} (1920) 167-202.

\bibitem{Vaillant96}
G.~Vaillant, {\em F{\o}lner conditions, nuclearity, and subexponential growth in C*-algebras},
J. Funct. Anal. {\bf 141} (1996) 435-448.

\bibitem{voicu91} D. Voiculescu, {\em A note on quasi-diagonal
C*-algebras and homotopy}, Duke Math. J. {\bf 62} (1991), 267-271.

\bibitem{Voiculescu93}
D.~Voiculescu, {\em Around quasidiagonal operators}, Integr. Equat. Oper. Theory
{\bf 17} (1993), 137-149.

\bibitem{WidomIn65}
H.~Widom, \emph{Toeplitz operators},
In {\em Studies of Real and Complex Analysis}, I.I.~Hirschman Jr. (ed.),
Prentice Hall Inc., Englewood Cliffs, New York, 1965; pp.~179-209.

\bibitem{Williams70}
J.P.~Williams,
{\em Finite operators},
Proc. Amer. Math. Soc. {\bf 26} (1970), 129-136.

\end{thebibliography}
\end{document}